\newcommand{\sK}{\mathcal{K}}
\newcommand{\sM}{\mathcal{M}}
\DeclareMathOperator{\nullity}{nullity}
\DeclareMathOperator{\range}{range}
\DeclareMathOperator{\rank}{rank}
\DeclareMathOperator{\diag}{diag}
\DeclareMathOperator{\IC}{IC}
\DeclareMathOperator{\nnz}{nnz}
\newtheorem{theorem}{Theorem}
\newtheorem{lemma}[theorem]{Lemma}
\newtheorem{proposition}[theorem]{Proposition}
\newcommand*{\overtabline}{%
	\noalign{%
		\vskip-.5\dimexpr\ht\@arstrutbox+\dp\@arstrutbox\relax
		\vskip-.2pt\relax
		\hrule
		\vskip-.2pt\relax
		\vskip+.5\dimexpr\ht\@arstrutbox+\dp\@arstrutbox\relax
	}%
}
\begin{document}
	\author{Susanne Bradley\thanks{Department of Computer Science, The University of British Columbia, Vancouver, Canada V6T 1Z4
			({smbrad@cs.ubc.ca}, {greif@cs.ubc.ca}).}
		\and Chen Greif\footnotemark[2]}
	\title{Augmentation-Based Preconditioners for Saddle-Point Systems with Singular Leading Blocks}
		
	\maketitle
		
	\begin{abstract}
		We consider the iterative solution of symmetric saddle-point matrices with a singular leading block. We develop a new ideal positive definite block diagonal preconditioner that yields a preconditioned operator with four distinct eigenvalues. We offer a few techniques for making the preconditioner practical, and illustrate the effectiveness of our approach with numerical experiments.
	\end{abstract}
		

	\section{Introduction}
	\label{sec:intro}
	Consider the saddle-point system
	\begin{equation}
		\label{eq:sp-system}
		\begin{bmatrix}
			A & B^T \\
			B & 0
		\end{bmatrix}
		\begin{bmatrix}
			x \\
			y
		\end{bmatrix} =
		\begin{bmatrix}
			f \\
			g
		\end{bmatrix},
	\end{equation}
	where $A \in \mathbb{R}^{n \times n}$ is symmetric positive semidefinite and $B \in \mathbb{R}^{m \times n}$ has full row rank, with $m < n$. We denote the coefficient matrix by
	\begin{equation*}
		\sK = \begin{bmatrix}
			A & B^T \\
			B & 0
		\end{bmatrix}.
	\end{equation*}
	We assume throughout that $\sK$ is invertible. 
	A necessary and sufficient condition for this is that $\ker(A) \cap \ker(B) = \{ 0\}$; see \cite[Theorem 3.2]{bgl05}. Thus, the nullity of $A$ must be no greater than $m$, or $\sK$ will necessarily be singular. We therefore say that a leading block $A$ with nullity $m$ is \textit{lowest-rank} or \textit{maximally rank-deficient}.
	Under the assumptions above, the matrix $\sK$ is symmetric and indefinite, and the solution of the linear system \eqref{eq:sp-system} poses several numerical challenges; we refer to the survey of \cite{bgl05} for an overview of solution methods. 

Our focus is on positive definite preconditioners, which maintain symmetry of the preconditioned operator and can therefore be used with a symmetric iterative solver such as MINRES \cite{p75}.	
	When $A$ is positive definite, the preconditioner of Murphy, Golub, and Wathen \cite{mgw00}
	\begin{equation*}
	\sM_1 = \begin{bmatrix}
		A & 0 \\
		0 & BA^{-1}B^T
	\end{bmatrix}
	\end{equation*}
	has the property that the preconditioned operator $\sM_1^{-1}\sK$ has three distinct eigenvalues, meaning that a preconditioned iterative solver (such as MINRES) will converge within three iterations, in exact arithmetic. In practice, the matrices $A$ and $BA^{-1}B^T$ are too expensive to form and solve for exactly, so approximations must be sought.
	
	The case in which $A$ is singular has been less studied; see \cite{eg15,ggv05,gs07}
	for preconditioning approaches in this setting. Golub, Greif, and Varah \cite{ggv05} have analyzed the positive definite block diagonal preconditioner
	\begin{equation*}
		\sM_2 = \begin{bmatrix}
			A + B^T W B & 0 \\
			0 & B(A+B^TWB)^{-1}B^T
		\end{bmatrix},
	\end{equation*}
	where $W \in \mathbb{R}^{m \times m}$ is a positive semidefinite matrix such that $A + B^T WB$ is positive definite. This can be considered a generalization of $\sM_1$, in which a semidefinite term is first added to the leading block to make it positive definite. Because of the requirement that $\ker(A) \cap \ker(B) = \{0\}$, the matrix $A+B^T W B$ is necessarily positive definite if $W$ is positive definite (though this is not a necessary condition unless $A$ is lowest-rank).
	
	While the preconditioned operator $\sM_2^{-1}\sK$ is not guaranteed to have a fixed, small number of distinct eigenvalues, it is shown in \cite[Theorem 2.5]{ggv05} that the eigenvalues are bounded within the intervals $\left[ -1, \frac{1-\sqrt{5}}{2} \right] \cup \left[ 1, \frac{1+\sqrt{5}}{2} \right]$. However, from \cite[Theorem 3.5]{eg15} and \cite[Theorem 4.1]{gs07}, we can observe that $\sM_2^{-1}\sK$ does have exactly two distinct eigenvalues when $A$ has maximal nullity.
	
	{\bf Contribution of this paper.}
	At present, the literature provides ideal positive definite block diagonal preconditioners that yield preconditioned operators with a small number of distinct eigenvalues (and, therefore, will lead to convergence of a preconditioned iterative solver in a small number of iterations in the absence of round-off error) in the cases where $A$ has full rank and where $A$ has maximal nullity. In this work, we bridge the gap between the full-rank and minimal-rank (or maximal-nullity) cases by providing such a preconditioner for cases in which $(n-m) < \rank(A) < n$. This is meaningful because on the one hand we cannot invert $A$ and given its assumed rank deficiency, the Schur complement $B A^{-1} B^T$ does not exist either, making it difficult to develop standard preconditioners. And on the other hand unique algebraic properties that have been studied in \cite{eg15,ggv05,gs07} for the maximal-nullity case cannot be applied either.
	
	{\bf Outline.}  We provide relevant mathematical background in Section \ref{sec:background} and describe our preconditioning approach in \ref{sec:prec_ours}. We then provide numerical experiments in Section \ref{sec:numex} and concluding remarks in Section \ref{sec:conclusions}.
	
	\section{Mathematical background}
	\label{sec:background}
	In this section, we provide some existing results that will aid us in developing and analyzing our preconditioner. Section \ref{sec:aug} describes previous strategies in the literature for augmenting a rank-deficient leading block $A$, and Section \ref{sec:mrd_properties} describes some special properties of matrices with maximally rank-deficient leading blocks. We then use these techniques to provide an alternative proof of a result in \cite{gs07} for matrices with a maximally rank-deficient $A$, and we use the insights of this alternative proof to adapt this approach to matrices with non-maximally rank-deficient $A$ in Section \ref{sec:prec_ours}.
	
	\subsection{Leading block augmentation}
	\label{sec:aug}
	Our strategy for preconditioning involves augmenting the leading block $A$ so that it becomes positive definite, rather than semidefinite. We observe that \eqref{eq:sp-system} can be reformulated as (see, for example, \cite{f75,gg03}):
	\begin{equation*}
		\begin{bmatrix}
			A+B^TWB & B^T \\
			B & 0
		\end{bmatrix}\begin{bmatrix}
		x \\
		y
	\end{bmatrix} = \begin{bmatrix}
	f + B^TWg \\
	g
	\end{bmatrix},
	\end{equation*}
	where $W$ is an $m \times m$ matrix. We will assume $W$ is positive semidefinite and the leading block 
	\begin{equation}
	A_W = A+B^TWB
	\label{eq:ABTWB}
	\end{equation} 
	is positive definite. An advantage of this approach is that a positive definite leading block will provide flexibility in both forming and analyzing our preconditioners later in this paper. This approach was effective used in \cite{bo06} for fluid flow problems. We also recall the following result \cite{f75,gg03}:
	\begin{lemma}
		\label{lem:kw}
		Let
		\begin{equation*}
			\sK(W) = \begin{bmatrix}
				A_W & B^T \\
				B & 0
			\end{bmatrix},
		\end{equation*}
		where $W \in \mathbb{R}^{m \times m}$. If $\sK$ and $\sK(W)$ are both nonsingular, then
		\begin{equation*}
			\sK^{-1} = (\sK(W))^{-1} + \begin{bmatrix}
				0 & 0 \\
				0 & W
			\end{bmatrix}.
		\end{equation*}
	\end{lemma}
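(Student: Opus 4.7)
The plan is to verify the identity directly by multiplying $\sK$ against the expression on the right-hand side and showing that the product is the identity. First I would write the symmetric matrix $(\sK(W))^{-1}$ in block form as
\begin{equation*}
(\sK(W))^{-1} = \begin{bmatrix} P & Q \\ Q^T & R \end{bmatrix}.
\end{equation*}
The essential observation is that $\sK$ and $\sK(W)$ share the same lower block row $\begin{bmatrix} B & 0 \end{bmatrix}$. Hence the lower block row of $\sK(W)(\sK(W))^{-1} = I$ immediately yields the two identities $BP = 0$ and $BQ = I$.

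Next I would compute $\sK(\sK(W))^{-1}$ by substituting $A = A_W - B^T W B$ in the top row and using the relations above. The $(1,1)$ block collapses to $A_W P + B^T Q^T - B^T W B P = I$ (using the top row of $\sK(W)(\sK(W))^{-1} = I$ and $BP=0$), the $(1,2)$ block collapses to $A_W Q + B^T R - B^T W B Q = -B^T W$ (using $BQ=I$), and the lower row is $\begin{bmatrix} BP & BQ \end{bmatrix} = \begin{bmatrix} 0 & I \end{bmatrix}$. A direct calculation then gives $\sK \begin{bmatrix} 0 & 0 \\ 0 & W \end{bmatrix} = \begin{bmatrix} 0 & B^T W \\ 0 & 0 \end{bmatrix}$, which precisely cancels the off-diagonal $-B^T W$, producing $I$. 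This establishes the claim.

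There is no real obstacle here; the argument is routine block-matrix algebra, turning on the single structural fact that augmenting $A$ by $B^T W B$ leaves the constraint block $B$ unchanged, so the constraint relations $BP = 0$, $BQ = I$ satisfied by the lower rows of the inverse are shared between $\sK^{-1}$ and $(\sK(W))^{-1}$. An alternative derivation via the Sherman--Morrison--Woodbury formula applied to the rank update $\sK(W) = \sK + \begin{bmatrix} B^T \\ 0 \end{bmatrix} W \begin{bmatrix} B & 0 \end{bmatrix}$ is possible but would require the additional assumption that $W$ is invertible, whereas the direct verification above only uses the nonsingularity hypotheses stated in the lemma.
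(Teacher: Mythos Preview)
Your direct block-matrix verification is correct. The only minor point is notational: the lemma as stated does not assume $W$ symmetric, so $(\sK(W))^{-1}$ need not be symmetric and writing its lower-left block as $Q^T$ is not quite justified; however, your argument never actually uses that symmetry, so replacing $Q^T$ by an independent block $S$ throughout leaves the proof intact.

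As for comparison with the paper: the paper does not prove this lemma at all. It is quoted as a known result with citations to Fortin and to Golub--Greif, so there is no ``paper's own proof'' to compare against. Your elementary verification---exploiting that $\sK$ and $\sK(W)$ share the constraint row $\begin{bmatrix} B & 0 \end{bmatrix}$, hence the relations $BP=0$, $BQ=I$ from $(\sK(W))^{-1}$ carry over---is the standard way to see the identity and is exactly what one would expect.
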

	
	\subsection{Matrix properties when $\nullity(A) = m$}
	\label{sec:mrd_properties}
	When $A$ has maximal nullity -- that is, when $\nullity(A) = m$ -- the blocks of $\sK$ and those of the augmented matrix $\sK(W)$ interact in unique ways, which provide useful tools in the design and analysis of preconditioners.
	
	Estrin and Greif \cite[Theorem 3.5]{eg15} provide the following result on the Schur complement of $\sK(W)$:
	\begin{proposition}
		\label{lem:mrd_schur}
		Suppose $\nullity(A) = m$ and let $W \in \mathbb{R}^{m \times m}$ be an invertible matrix. Then
		$$
		B(A+B^TWB)^{-1}B^T = W^{-1}.
		$$
	\end{proposition}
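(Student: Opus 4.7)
The key structural observation I would exploit is that when $\nullity(A) = m$ and $\ker(A) \cap \ker(B) = \{0\}$, a dimension count forces $\mathbb{R}^n = \ker(A) \oplus \ker(B)$, since both subspaces have total dimension $m + (n-m) = n$ and trivial intersection. In particular, if $N \in \mathbb{R}^{n \times m}$ is any matrix whose columns form a basis for $\ker(A)$, then $B$ restricted to $\ker(A)$ is an injection into $\mathbb{R}^m$, so the $m \times m$ matrix $BN$ is invertible.

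With this in hand, the plan is to directly exhibit the matrix $X := (A+B^TWB)^{-1}B^T$ by solving $(A+B^TWB)X = B^T$ explicitly. I would propose the candidate
\begin{equation*}
X = N(BN)^{-1}W^{-1}.
\end{equation*}
Because $AN = 0$, we get $AX = 0$, and because $BN(BN)^{-1} = I_m$, we get $BX = W^{-1}$. Multiplying the latter by $B^TW$ gives $B^T W B X = B^T$, so altogether $(A + B^TWB)X = 0 + B^T = B^T$. Left-multiplying by $B$ then yields $B(A+B^TWB)^{-1}B^T = BX = W^{-1}$, which is the desired identity.

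The one step that needs a brief justification is that $A + B^T W B$ is genuinely invertible, so that $X$ is uniquely characterized as $(A+B^TWB)^{-1}B^T$. If $(A+B^TWB)v = 0$ for some $v \in \mathbb{R}^n$, write $v = v_1 + v_2$ with $v_1 \in \ker(A)$ and $v_2 \in \ker(B)$ using the direct sum decomposition above. Then $Av = Av_2$ and $B^TWBv = B^TWBv_1$, and the equation reduces, after multiplying by $N^T$ on the left and by $v_2^T$ on the left separately, to forcing both $v_1$ and $v_2$ to vanish (using invertibility of $W$ and of $BN$, plus positive semidefiniteness of $A$ on $\ker(B)$, where it is in fact positive definite since $\ker(A)\cap\ker(B)=\{0\}$). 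This is the only place where I expect to spend any real thought; once invertibility is established, the rest is an immediate verification.
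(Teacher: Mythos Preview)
Your proof is correct. The paper itself does not prove this proposition; it is quoted from Estrin and Greif without argument, so there is no in-paper proof to compare against. Your approach is self-contained and direct: the dimension count forcing $\ker(A) \oplus \ker(B) = \mathbb{R}^n$ lets you write down the explicit formula $(A+B^TWB)^{-1}B^T = N(BN)^{-1}W^{-1}$, after which the identity is a one-line verification.

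The invertibility argument is also fine. One small simplification you may want: your two test multiplications by $N^T$ and by $v_2^T$ actually succeed independently rather than sequentially. Since $v_2 \in \ker(B)$ gives $v_2^T B^T = 0$, the term $v_2^T B^T W B v_1$ vanishes regardless of $v_1$, so $v_2^T A v_2 = 0$ and hence $v_2 \in \ker(A)\cap\ker(B)=\{0\}$ directly; meanwhile the $N^T$ multiplication gives $(BN)^T W (BN) c = 0$ with $v_1 = Nc$, and invertibility of $BN$ and $W$ forces $c=0$.
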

	
	We also recall the following result \cite[Corollary 2.1]{eg16} applying to more general matrices, which we will use repeatedly in our analyses:
	\begin{lemma}
		\label{lem:MandN}
		For matrices $M, N \in \mathbb{R}^{n \times n}$ with $\rank(M) = r, \rank(n) = n-r$ and $M+N$ nonsingular, the matrix $(M+N)^{-1}M$ is a projector with rank $r$. Moreover,
		$$
		M(M+N)^{-1}N = 0.
		$$
	\end{lemma}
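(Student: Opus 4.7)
The plan is to first extract the key structural consequence of the rank hypothesis, then use it to prove both statements in one stroke. From $\rank(M+N) = n$ together with $\range(M+N) \subseteq \range(M) + \range(N)$ and the dimension count $\rank(M) + \rank(N) = n$, I would deduce the direct-sum decomposition $\mathbb{R}^n = \range(M) \oplus \range(N)$. Dually, $\ker(M) \cap \ker(N) \subseteq \ker(M+N) = \{0\}$ combined with $\dim \ker(M) + \dim \ker(N) = n$ gives $\mathbb{R}^n = \ker(M) \oplus \ker(N)$. These two complementary splittings are the backbone of the proof.

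Next I would tackle the identity $M(M+N)^{-1}N = 0$ directly. Fix $z \in \mathbb{R}^n$ and set $w = (M+N)^{-1}Nz$. Using the kernel decomposition, write $w = u + v$ with $u \in \ker(N)$ and $v \in \ker(M)$, so $(M+N)w = Mu + Nv = Nz$, which rearranges to $Mu = N(z-v)$. The left side lies in $\range(M)$ and the right in $\range(N)$, so by the range decomposition both must be zero. In particular $u \in \ker(M) \cap \ker(N) = \{0\}$, and therefore $M w = Mu = 0$. Since $z$ was arbitrary, $M(M+N)^{-1}N = 0$.

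To conclude the projector statement, I would write $N = (M+N) - M$, so that
\begin{equation*}
0 = M(M+N)^{-1}N = M - M(M+N)^{-1}M,
\end{equation*}
i.e., $M(M+N)^{-1}M = M$. Left-multiplying by $(M+N)^{-1}$ shows $\bigl((M+N)^{-1}M\bigr)^2 = (M+N)^{-1}M$, so $(M+N)^{-1}M$ is idempotent. Its rank equals $\rank(M) = r$ because $(M+N)^{-1}$ is invertible and does not change rank.

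I do not expect any real obstacle here; the only subtle step is recognizing that the hypotheses force both $\range(M)\oplus\range(N)=\mathbb{R}^n$ and $\ker(M)\oplus\ker(N)=\mathbb{R}^n$. Once those two decompositions are in hand, the argument that $Mu$ and $N(z-v)$ must simultaneously vanish is immediate, and the projector property then follows by a one-line algebraic rearrangement.
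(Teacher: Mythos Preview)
Your argument is correct. The two direct-sum decompositions you extract (of $\mathbb{R}^n$ into $\range(M)\oplus\range(N)$ and into $\ker(M)\oplus\ker(N)$) are exactly the structural content of the hypotheses, and from them the vanishing of $M(M+N)^{-1}N$ and the idempotence of $(M+N)^{-1}M$ follow cleanly as you describe. One tiny cosmetic point: when you conclude $Mw = Mu = 0$, you are implicitly using $Mv = 0$ (which holds since $v\in\ker(M)$); it might be worth making that explicit, but the logic is sound.

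Note, however, that the paper does \emph{not} supply its own proof of this lemma: it is quoted as \cite[Corollary~2.1]{eg16} and used as a black box throughout. So there is no ``paper's proof'' to compare against here. Your write-up therefore stands as a self-contained verification of a result the paper simply imports, and the elementary range/kernel decomposition route you take is a perfectly natural way to establish it.
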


	A recent preprint \cite{bg22_eig} provides eigenvalue bounds for saddle-point systems with a rank-deficient leading block. We will use the following result \cite[Theorem 7]{bg22_eig} in our analyses:
	\begin{theorem}
		\label{thm:mrd_K_bnd}
		When $\rank(A) = n-m$, the positive eigenvalues of $\sK$ are greater than or equal to
		$$
		\min\left\{ \mu_{\min}^+ (1- \cos(\theta_{\min})), \sigma_{\min}\sqrt{1-\cos(\theta_{\min})} \right\}
		$$
		where: $\mu_{\min}^+$ denotes the smallest positive eigenvalue of $A$; $\sigma_{\min}$ the smallest singular value of $B$; and $\theta_{\min}$ the minimum principal angle between $\range(A)$ and $\range(B^T)$.
	\end{theorem}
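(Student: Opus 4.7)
I would begin with the eigenvalue equations for a positive eigenpair $(\lambda, [x;y])$ of $\sK$. The second block row gives $y = Bx/\lambda$, and substituting into the first block row $Ax + B^{T}y = \lambda x$ and then taking the inner product with $x$ yields the scalar identity
\begin{equation*}
\lambda^{2}\|x\|^{2} \;=\; \lambda\, x^{T} A x \;+\; \|Bx\|^{2}.
\end{equation*}
From this, two elementary consequences follow: $\lambda \geq x^{T} A x / \|x\|^{2}$ and $\lambda \geq \|Bx\|/\|x\|$. The idea is to exploit these two bounds in complementary regimes according to where the mass of $x$ sits.

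Next I would orthogonally decompose $x = x_{1} + x_{2}$ with $x_{1}\in\range(A)$ and $x_{2}\in\ker(A)$, so that $\|x\|^{2} = \|x_{1}\|^{2}+\|x_{2}\|^{2}$ and $x^{T}Ax = x_{1}^{T}Ax_{1}\geq \mu_{\min}^{+}\|x_{1}\|^{2}$; the first bound thus becomes $\lambda \geq \mu_{\min}^{+}\|x_{1}\|^{2}/\|x\|^{2}$. For the second bound, write $Bx = Bx_{1}+Bx_{2}$ and note that the cosines of the principal angles between $\range(A)$ and $\range(B^{T})$ are exactly the singular values of the product of the corresponding orthogonal projectors. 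This observation supplies both an upper bound $\|Bx_{1}\|\leq \sigma_{\max}\cos\theta_{\min}\|x_{1}\|$ and a matching lower bound for the action of $B$ on $\ker(A)$ controlled by $\sin\theta_{\min}$. Combining them, the target geometric lemma is a bound of the form
\begin{equation*}
\|Bx\|^{2} \;\geq\; \sigma_{\min}^{2}\,(1-\cos\theta_{\min})\,\|x_{2}\|^{2},
\end{equation*}
in which the factor $1-\cos\theta_{\min}$ absorbs the worst-case cancellation between $Bx_{1}$ and $Bx_{2}$. Finally, a case split on which of $\|x_{1}\|$ or $\|x_{2}\|$ dominates $\|x\|$ activates either the first bound (producing the $\mu_{\min}^{+}(1-\cos\theta_{\min})$ term) or the second (producing the $\sigma_{\min}\sqrt{1-\cos\theta_{\min}}$ term); taking the weaker of the two recovers the stated minimum.

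\textbf{Main obstacle.} The delicate step is the geometric lemma that produces precisely the factor $(1-\cos\theta_{\min})$ rather than the more immediate $\sin^{2}\theta_{\min}$. The issue is that $Bx_{1}$ and $Bx_{2}$ can partially cancel, and controlling the cancellation requires pinning down simultaneously how the smallest singular value of $B$ restricted to $\ker(A)$ behaves and how closely $\range(A)$ can align with $\range(B^{T})$ under the hypothesis $\ker(A)\cap\ker(B)=\{0\}$. The projector-singular-value machinery makes this tractable, but identifying the worst-case $x$ so that the two regimes glue together to give exactly the claimed bound is the main technical hurdle.
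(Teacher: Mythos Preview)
The paper does not prove this statement at all: it is quoted verbatim as \cite[Theorem~7]{bg22\_eig} and used as a black box in the discussion following Lemma~\ref{lem:wishlist}. There is therefore no in-paper argument to compare your proposal against.

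On your outline itself, two concrete issues stand between the sketch and the stated constants. First, the strategy of extracting the two \emph{separate} inequalities $\lambda \ge x^{T}Ax/\|x\|^{2}$ and $\lambda \ge \|Bx\|/\|x\|$ from the quadratic identity and then doing a case split on $\|x_{1}\|$ versus $\|x_{2}\|$ is lossy: even in the cleanest case $\theta_{\min}=\pi/2$ (so $\range(A)=\ker(B)$ and $\ker(A)=\range(B^{T})$), thresholding at $\|x_{1}\|^{2}\ge \tfrac12\|x\|^{2}$ only yields $\lambda \ge \tfrac12\mu_{\min}^{+}$, not the claimed $\lambda\ge \mu_{\min}^{+}$. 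To recover the sharp constant there one must keep the full identity $\lambda(\lambda-\mu_{\min}^{+})\|x_{1}\|^{2}+(\lambda^{2}-\sigma_{\min}^{2})\|x_{2}\|^{2}\ge 0$ and argue by sign, not pass through the two one-sided bounds. The same loss will contaminate the general-angle case. Second, your case split does not reproduce the second term: if you threshold at $\|x_{1}\|^{2}\ge (1-\cos\theta_{\min})\|x\|^{2}$ to obtain $\mu_{\min}^{+}(1-\cos\theta_{\min})$ in one regime, the complementary regime has $\|x_{2}\|^{2}\le \|x\|^2$ only bounded below by $\cos\theta_{\min}\,\|x\|^{2}$, and together with your geometric lemma this gives $\sigma_{\min}\sqrt{(1-\cos\theta_{\min})\cos\theta_{\min}}$, which is strictly weaker than $\sigma_{\min}\sqrt{1-\cos\theta_{\min}}$. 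So either the geometric lemma must be strengthened beyond what you state, or the argument must stay with the quadratic relation rather than splitting it. Your instinct that the $(1-\cos\theta_{\min})$ factor is the crux is correct; the gap is that the proposed case-split mechanism cannot deliver it.
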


	\subsection{Preconditioning when $\nullity(A) = m$}
	\label{sec:mrd_prec}
	We consider the block diagonal preconditioner \cite{gs07}
	\begin{equation}
		\label{eqn:sM}
		\sM_W = \begin{bmatrix}
			A_W & 0 \\
			0 & W^{-1}
		\end{bmatrix},
	\end{equation}
	where $W$ is positive definite and $A_W$ is as defined in~\eqref{eq:ABTWB}. Let us denote the blocks of the split preconditioned operator $\sM_W^{-1/2}\sK \sM_W^{-1/2}$ as follows:
	\begin{equation*}
		\sM_W^{-1/2}\sK \sM_W^{-1/2} = \begin{bmatrix}
			A_W^{-1/2}A A_W^{-1/2} & A_W^{-1/2}B^T W^{1/2} \\
			W^{1/2} B A_W^{1/2} & 0
		\end{bmatrix}
		=: \begin{bmatrix}
			\tilde{A} & \tilde{B}^T \\
			\tilde{B} & 0
		\end{bmatrix}.
	\end{equation*}
	
	\begin{lemma}
		\label{lem:wishlist}
		When $\rank(A) = n-m$, the blocks of $\sM_W^{-1/2}\sK \sM_W^{-1/2}$ satisfy the following:
		\begin{enumerate}[label={(\roman*)}]
			\item All nonzero eigenvalues of $\tilde{A}$ are equal to 1;
			\item All singular values of $\tilde{B}$ are equal to 1;
			\item The subspaces $\range(\tilde{A})$ and $\range(\tilde{B}^T)$ are orthogonal.
		\end{enumerate}
	\end{lemma}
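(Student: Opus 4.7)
The plan is to reduce the three claims to a single observation: under the maximal-nullity assumption, $\tilde{A}$ and $\tilde{B}^T\tilde{B}$ are complementary orthogonal projectors.

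First I would handle claim (ii) directly. Since $\tilde{B}\tilde{B}^T = W^{1/2}BA_W^{-1}B^T W^{1/2}$ and Proposition~\ref{lem:mrd_schur} (applicable because $\nullity(A)=m$ and $W$ is invertible) gives $BA_W^{-1}B^T = W^{-1}$, I get $\tilde{B}\tilde{B}^T = W^{1/2}W^{-1}W^{1/2} = I_m$. Hence every singular value of $\tilde{B}$ equals $1$, establishing (ii). This also shows that $\tilde{B}^T\tilde{B}$ is an orthogonal projector of rank $m$ onto $\range(\tilde{B}^T)$.

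Next I would observe the key identity $\tilde{A} = I_n - \tilde{B}^T\tilde{B}$. This follows simply by conjugating $A = A_W - B^TWB$ by $A_W^{-1/2}$ on both sides:
\begin{equation*}
\tilde{A} = A_W^{-1/2}AA_W^{-1/2} = I_n - A_W^{-1/2}B^TWBA_W^{-1/2} = I_n - \tilde{B}^T\tilde{B}.
\end{equation*}
Since $\tilde{B}^T\tilde{B}$ is an orthogonal projector, $\tilde{A}$ is the complementary orthogonal projector onto $\range(\tilde{B}^T)^\perp$. In particular, the nonzero eigenvalues of $\tilde{A}$ are all equal to $1$, proving (i), and $\range(\tilde{A}) = \range(\tilde{B}^T)^\perp$, proving (iii).

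\textbf{Main obstacle.} There is essentially no obstacle beyond recognizing the right decomposition; the work is entirely front-loaded into Proposition~\ref{lem:mrd_schur}, which is where the maximal rank-deficiency assumption $\rank(A)=n-m$ is used. (One could alternatively derive (i) from Lemma~\ref{lem:MandN} applied to $M=A$, $N=B^TWB$, which yields that $A_W^{-1}A$ is a rank-$(n-m)$ projector similar to $\tilde{A}$, but the route via $\tilde{A} = I - \tilde{B}^T\tilde{B}$ is cleaner because it delivers (i) and (iii) simultaneously.) The proof should be only a few lines.
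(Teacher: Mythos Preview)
Your proof is correct, and part (ii) is handled exactly as in the paper. The route to (i) and (iii), however, is genuinely different. The paper treats the three claims separately: for (i) it invokes Lemma~\ref{lem:MandN} to conclude that $A_W^{-1}A$ (similar to $\tilde{A}$) is a projector, and for (iii) it cites an external result \cite[Proposition~2.6]{eg15} stating that $A_W^{-1}B^T$ is a null-space matrix of $A$, whence $\tilde{A}\tilde{B}^T = 0$. Your approach instead extracts the algebraic identity $\tilde{A} = I_n - \tilde{B}^T\tilde{B}$ (valid regardless of rank) directly from $A = A_W - B^TWB$, and then uses (ii) to recognize $\tilde{B}^T\tilde{B}$ as an orthogonal projector; (i) and (iii) drop out simultaneously. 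This is more self-contained---no appeal to Lemma~\ref{lem:MandN} or to \cite{eg15} is needed---and it makes transparent that the maximal-nullity hypothesis enters only through Proposition~\ref{lem:mrd_schur}. The paper's route, on the other hand, keeps the three properties logically independent, which is perhaps more suggestive for the later Section~\ref{sec:prec_derive}, where the authors try to preserve each property separately when $\nullity(A)<m$.
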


	\begin{proof}
		To prove (i), we note that $\tilde{A}$ is similar to $A_W^{-1}A$, which is a projector by Lemma \ref{lem:MandN}. Lemma \ref{lem:mrd_schur} gives us that $BA_W^{-1}B^T = W^{-1}$, and therefore
		$$
		\tilde{B}\tilde{B}^T = W^{1/2}BA_W^{-1}B^TW^{1/2} = I,
		$$
		which proves (ii). We prove (iii) by showing that $\range(\tilde{B}^T) \in \ker(\tilde{A})$. We write
		$$
		\tilde{A}\tilde{B}^T = A_W^{-1/2}AA_W^{-1}B^T W^{-1/2} = 0,
		$$
		where the second inequality follows from the result of \cite[Proposition 2.6]{eg15}, which shows that $A_W^{-1}B^T$ is a null-space matrix of $A$.
	\end{proof}
	
	We now consider what the results of Lemma \ref{lem:wishlist} tell us about the eigenvalues of $\sM_W^{-1}\sK$ when $\rank(A) = n-m$. The orthogonality of $\range(\tilde{A})$ and $\range(\tilde{B}^T)$ means that the value of $\cos(\theta_{\min})$ in Theorem \ref{thm:mrd_K_bnd} is 1, and thus that the positive eigenvalues are greater than or equal to the minimum of the smallest positive eigenvalue of $\tilde{A}$ and the smallest singular value of $\tilde{B}$. These are both equal to 1, by parts (i)-(ii) of Lemma \ref{lem:wishlist}. Because the maximal eigenvalues of $\tilde{A}$ and singular values of $\tilde{B}$ are also equal to 1, all negative eigenvalues are equal to $-1$ and all positive eigenvalues are less than or equal to $1$ (as a consequence of Lemma 2.1 of Rusten and Winther \cite{rw92}). This yields the following result, which is also shown via a different proof method in \cite[Theorem 4.1]{gs07}; we refer to their proof for derivation of the multiplicities of the eigenvalues.
	\begin{proposition}
		\label{prop:gs07}
		When $\rank(A) = n-m$, the matrix $\sM_W^{-1}\sK$ has two distinct eigenvalues given by $1$ and $-1$ with algebraic multiplicities $n$ and $m$, respectively.
	\end{proposition}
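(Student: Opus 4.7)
The plan is to work with the symmetric split preconditioned operator $\sM_W^{-1/2}\sK\sM_W^{-1/2}$, which has the same eigenvalues as $\sM_W^{-1}\sK$. I would first refine Lemma \ref{lem:wishlist} into a clean geometric picture. Since $\tilde{A} = A_W^{-1/2} A A_W^{-1/2}$ is symmetric and, by part (i), has eigenvalues only in $\{0, 1\}$, it is an orthogonal projector of rank $n-m$. Combined with part (iii) and the fact that $\range(\tilde{B}^T)$ has dimension $m$ (since $B$ has full row rank), a dimension count forces
$$
\ker(\tilde{A}) = \range(\tilde{B}^T), \qquad \mathbb{R}^n = \range(\tilde{A}) \oplus \range(\tilde{B}^T),
$$
an orthogonal direct sum. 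Showing this equality, rather than just the containment given in part (iii), is the main preparatory step.

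Next, I would analyze the eigenproblem by decomposing any candidate eigenvector $(u; v)^T$ via $u = u_1 + u_2$ with $u_1 \in \range(\tilde{A})$ and $u_2 \in \range(\tilde{B}^T)$. Then $\tilde{A} u = u_1$, while $\tilde{B}^T v$ lives in $\range(\tilde{B}^T)$ so contributes only to the $u_2$ component, and $\tilde{B} u_1 = 0$ because $\range(\tilde{A}) = \range(\tilde{B}^T)^\perp = \ker(\tilde{B})$. The eigenvalue equation therefore splits into the three scalar/vector relations
$$
u_1 = \lambda u_1, \quad \tilde{B}^T v = \lambda u_2, \quad \tilde{B} u_2 = \lambda v.
$$
Combining the last two with part (ii), which gives $\tilde{B}\tilde{B}^T = I$, yields $v = \lambda^2 v$, so whenever $v \neq 0$ or $u_2 \neq 0$ we must have $\lambda = \pm 1$; the first equation handles the remaining case.

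Finally, I would count multiplicities directly from this parameterization. For $\lambda = 1$, I may take $u_1 \in \range(\tilde{A})$ arbitrary (contributing $n-m$ dimensions) and $v \in \mathbb{R}^m$ arbitrary with $u_2 = \tilde{B}^T v$ (contributing $m$ dimensions), giving an eigenspace of dimension exactly $n$. For $\lambda = -1$, the equation $u_1 = -u_1$ forces $u_1 = 0$, while $v \in \mathbb{R}^m$ is free and $u_2 = -\tilde{B}^T v$, yielding an eigenspace of dimension $m$. This accounts for all $n+m$ eigenvalues and matches the claimed multiplicities.

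The main obstacle, as I see it, is not any single calculation but rather recognizing that the symmetry of $\tilde{A}$ upgrades part (iii) of Lemma \ref{lem:wishlist} to an orthogonal direct-sum decomposition of $\mathbb{R}^n$ into $\range(\tilde{A})$ and $\range(\tilde{B}^T)$. Once that decomposition is in hand, the eigensystem decouples into a trivial block on $\range(\tilde{A})$ and a block governed by the isometry $\tilde{B}\tilde{B}^T = I$, and the eigenvalues and multiplicities fall out immediately.
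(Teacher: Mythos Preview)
Your argument is correct and self-contained: the key observation that $\tilde{A}$ is symmetric with spectrum in $\{0,1\}$ makes it an \emph{orthogonal} projector, so the containment in part~(iii) of Lemma~\ref{lem:wishlist} upgrades to the orthogonal splitting $\mathbb{R}^n=\range(\tilde{A})\oplus\range(\tilde{B}^T)$ by a dimension count, and then the eigensystem decouples exactly as you describe. The multiplicity count is clean and complete.

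This is, however, a genuinely different route from the paper's. The paper argues indirectly via eigenvalue \emph{bounds}: it invokes Theorem~\ref{thm:mrd_K_bnd} (which uses the principal angle between $\range(\tilde{A})$ and $\range(\tilde{B}^T)$) to pin the positive eigenvalues from below at~$1$, and then cites the Rusten--Winther bounds~\cite{rw92} to cap them from above at~$1$ and to force all negative eigenvalues to equal~$-1$; the multiplicities are then deferred to the original proof in~\cite{gs07}. Your approach is more elementary and fully self-contained---no external eigenvalue-bound machinery, and the multiplicities come for free from the explicit eigenspace parameterization. The paper's approach, on the other hand, is chosen deliberately to showcase Lemma~\ref{lem:wishlist} and Theorem~\ref{thm:mrd_K_bnd} as a template, since the same wishlist of properties is what motivates the design of the preconditioner~$\sM_k$ in Section~\ref{sec:prec_ours} for the non-maximal-nullity case.
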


	Proposition \ref{prop:gs07} tells us that, when $A$ has maximal nullity there is a block diagonal preconditioner that yields a preconditioned operator with two distinct eigenvalues. This is similar to the block diagonal preconditioner of \cite{mgw00}, which yields a preconditioner with three distinct eigenvalues in the case that $A$ is positive definite. What has not yet been developed is a preconditioner that gives a constant number of distinct eigenvalues for the ``in-between'' case where $A$ is rank-deficient, but not lowest-rank. This is the focus of the next section.

	\section{Block diagonal preconditioning for non-maximal nullity}
	\label{sec:prec_ours}
	
	\subsection{Preconditioner derivation}
	\label{sec:prec_derive}
	Let us now consider the case in which $A$ has nullity $k$, with $k < m$. We will now consider how we can devise a preconditioner to preserve (perhaps approximately) the properties listed in Lemma \ref{lem:wishlist} in the case where we no longer have maximal nullity.
	
	Let us consider a general block diagonal preconditioner of the form
	$$
	\sM = \begin{bmatrix}
		A+G & 0 \\
		0 & C
	\end{bmatrix},
	$$
	where $C$ is positive definite and $G$ is a semidefinite matrix such that $A+G$ is positive definite. As before, let us define the split preconditioned system:
	\begin{align*}
		\sM^{-1/2}\sK \sM^{-1/2} &= \begin{bmatrix}
			(A+G)^{-1/2}A (A+G)^{-1/2} & (A+G)^{-1/2}B^T C^{-1/2} \\
			C^{-1/2} B (A+G)^{-1/2} & 0
		\end{bmatrix}\\
		&=: \begin{bmatrix}
			\tilde{A} & \tilde{B}^T \\
			\tilde{B} & 0
		\end{bmatrix}.
	\end{align*}
	
	Property (i) of Lemma \ref{lem:wishlist} holds whenever $\rank(G) = k$; see Lemma \ref{lem:MandN}. It is also straightforward to verify, using a similar process as in the proof of Lemma \ref{lem:wishlist}, that Property (ii) holds if and only if
	$$
	C = B(A+G)^{-1}B^T.
	$$
	Property (iii) of Lemma \ref{lem:wishlist} holds because, in that Lemma's setting,
	$$
	A(A+G)^{-1}B^T = 0.
	$$
	We can write this as
	\begin{align}
	\label{eq:bSub}
	\begin{aligned}
		A(A+G)^{-1}B^T &= (A+G-G)(A+G)^{-1}B^T  \\
		&= B - G(A+G)^{-1}B^T. 
	\end{aligned}
	\end{align}
	Suppose that $G$ has rank $k$, as we have already established will ensure Property (i). Then, as a consequence of Lemma \ref{lem:MandN}, $G(A+G)^{-1}$ is a projector onto the range of $G$. From \eqref{eq:bSub} we see that Property (iii) will hold if $G(A+G)^{-1}$ is a projector onto the range of $B^T$; however, this is clearly not possible if $\rank(G) = k < m$. But we note that if we set
	$$
	G = B^T W_k B,
	$$
	where $W_k$ is a symmetric positive semidefinite matrix of rank $k$, this matrix will be a projector onto a rank-$k$ subspace of $\range(B^T)$. While Property (iii) will not hold in this case because we will not have $\tilde{A}\tilde{B}^T = 0$, we instead have that $\nullity(\tilde{A}\tilde{B}^T) = k$ (which is the highest nullity we can achieve, as from \eqref{eq:bSub} we have a rank-$k$ term being subtracted from $B$).
	
	Thus, we consider the preconditioner:
	\begin{equation}
		\label{eqn:Mk}
		\sM_k = \begin{bmatrix}
			A_k & 0 \\
			0 & S_k
		\end{bmatrix},
	\end{equation}
	where $A_k = A + B^T W_k B$ and $S_k = BA_k^{-1}B^T$, with $\rank(W_k) = \nullity(A) = k$. This is the same preconditioner analyzed in \cite{ggv05}, but with the additional assumption that $\rank(W_k) = k$.
	
	\paragraph{Remark 1} We note that, when $A$ has maximal nullity, the preconditioner $\sM_k$ reduces to that of Greif and Sch\"{o}tzau defined in eq. \eqref{eqn:sM}. When $A$ is positive definite, then $\sM_k$ is equivalent to the preconditioner $\sM_1$.
	
	\subsection{Analysis of $\sM_k$}
	\label{sec:prec_analysis}
	We present some lemmas that will be necessary for our analysis. 
	
	\begin{lemma}
		\label{lem:sc_add}
		When $\rank(W_k) = \nullity(A) = k$,
		$$
		(BA_k^{-1}B^T)^{-1} = W_k + (BB^T)^{-1}B(A - AVA)B^T(BB^T)^{-1},
		$$
		where $V = Z(Z^T A Z)^{-1}Z^T$ with $Z \in \mathbb{R}^{n \times (n-m)}$ being a null-space matrix of $B$.
	\end{lemma}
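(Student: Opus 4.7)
The plan is to identify the quantity $(BA_k^{-1}B^T)^{-1}$ with the $(2,2)$-block of $\sK^{-1}$ (up to a shift by $W_k$) and then compute that block directly via a null-space reduction.

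First, I would apply Lemma~\ref{lem:kw} with the choice $W = W_k$. Since $\rank(W_k)=\nullity(A)=k$ and $A_k = A + B^T W_k B$ is positive definite (so $\sK(W_k)$ is nonsingular), the Schur complement formula gives that the $(2,2)$-block of $\sK(W_k)^{-1}$ equals $-(BA_k^{-1}B^T)^{-1}$. Hence Lemma~\ref{lem:kw} yields
$$
(\sK^{-1})_{22} \;=\; -(BA_k^{-1}B^T)^{-1} + W_k.
$$
So the result will follow once I show
$$
(\sK^{-1})_{22} \;=\; -(BB^T)^{-1}B(A - AVA)B^T(BB^T)^{-1}.
$$

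Next, I would derive a closed-form expression for $(\sK^{-1})_{22}$ by solving the system
$\sK\begin{bsmallmatrix} x \\ y\end{bsmallmatrix} = \begin{bsmallmatrix} f \\ g\end{bsmallmatrix}$ and reading off the dependence of $y$ on $g$. Writing the general solution of $Bx = g$ as $x = B^T(BB^T)^{-1}g + Zu$ for some $u\in\mathbb{R}^{n-m}$, I substitute into the first block equation $Ax + B^T y = f$ and premultiply by $Z^T$. Using $B Z = 0$, this collapses to
$$
Z^T A Z \, u \;=\; Z^T f - Z^T A B^T(BB^T)^{-1} g,
$$
which I can solve for $u$ since $Z^T A Z$ is nonsingular (by $\ker(A)\cap\ker(B)=\{0\}$). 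Back-substituting gives $x = Vf + (I - VA) B^T(BB^T)^{-1}g$ with $V = Z(Z^T A Z)^{-1}Z^T$. Plugging this into $B^T y = f - Ax$ produces
$$
B^T y \;=\; (I - AV)f - (A - AVA)B^T(BB^T)^{-1} g,
$$
and multiplying on the left by $(BB^T)^{-1}B$ recovers $y$.

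The coefficient of $g$ in this final expression is precisely $(\sK^{-1})_{22}$, namely $-(BB^T)^{-1}B(A - AVA)B^T(BB^T)^{-1}$. Equating the two expressions for $(\sK^{-1})_{22}$ and rearranging yields the identity in the lemma. The main obstacle will be book-keeping: making sure the signs from the Schur complement of $\sK(W_k)$ (which introduce the minus in $-(BA_k^{-1}B^T)^{-1}$) match the signs coming out of the null-space reduction so that $W_k$ appears with the correct sign; everything else is routine linear algebra, and no use is made of Lemma~\ref{lem:mrd_schur} so the derivation is valid for general $k \le m$.
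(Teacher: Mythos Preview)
Your proposal is correct and follows essentially the same route as the paper: both identify $(\sK^{-1})_{22}$ in two ways---once via Lemma~\ref{lem:kw} and the Schur-complement form of $\sK(W_k)^{-1}$, and once via the null-space formula for $\sK^{-1}$---and equate them. The only cosmetic difference is that the paper quotes the null-space expression for $\sK^{-1}$ from \cite[Eq.~(3.8)]{bgl05}, whereas you rederive that formula by hand.
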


	\begin{proof}
		The proof follows by considering the block inverses of $\sK$ and
		$$
		\sK(W_k) := \begin{bmatrix}
			A_k & B^T \\
			B & 0
		\end{bmatrix}.
		$$
		Let $Z \in \mathbb{R}^{n \times (n-m)}$ denote a matrix whose columns form a basis for $\ker(B)$. The inverse of $\sK$ is (see \cite[Eq. (3.8)]{bgl05}):
		$$
		\sK^{-1} = \begin{bmatrix} 
			V & (I-VA)B^T(BB^T)^{-1} \\
			(BB^T)^{-1}B(I-AV) & -(BB^T)^{-1}B(A-AVA)B^T(BB^T)^{-1}
		\end{bmatrix},
		$$
		where $V = Z(Z^TAZ)^{-1}Z^T$; we note that $Z^T A Z$ must be nonsingular for any nonsingular $\sK$ (see \cite{bgl05}). The result then follows from Lemma \ref{lem:kw} and the fact that the (2,2)-block of $(\sK(W_k))^{-1}$ is equal to $-(BA_k^{-1}B^T)^{-1}$ (see \cite[Eq. (3.4)]{bgl05}).
	\end{proof}

	\begin{lemma}
		\label{lem:commute}
		The matrix $VA$ is a projector. Moreover, when $\rank(W_k) = \nullity(A) = k$, the following results hold:
		\begin{enumerate}[label={(\roman*)}]
			\item The matrix $A_k^{-1}A$ is a projector;
			\item The matrices $VA$ and $A_k^{-1}A$ commute.
		\end{enumerate}
	\end{lemma}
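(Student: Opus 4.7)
The plan is to handle the three claims in order, starting with the easiest. To see that $VA$ is a projector I would just compute
$$(VA)^2 = Z(Z^TAZ)^{-1}Z^TA \cdot Z(Z^TAZ)^{-1}Z^TA = Z(Z^TAZ)^{-1}Z^TA = VA,$$
where I use that $Z^TAZ$ is nonsingular (guaranteed, since $\sK$ is nonsingular). Note this part needs no hypothesis on $W_k$.

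For part (i), the assumption $\rank(W_k) = \nullity(A) = k$ gives $\rank(A) + \rank(B^TW_kB) = (n-k) + k = n$, using that $B$ has full row rank so $\rank(B^TW_kB) = \rank(W_k) = k$. Since $A_k = A + B^TW_kB$ is nonsingular by construction, Lemma \ref{lem:MandN} with $M = A$ and $N = B^TW_kB$ immediately yields that $A_k^{-1}A$ is a projector.

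For part (ii), I would aim to prove the stronger identity $A_k^{-1}A \cdot VA = VA \cdot A_k^{-1}A = VA$, which of course implies commutativity. The key identity is that $A_kZ = (A+B^TW_kB)Z = AZ$ since $BZ = 0$, so $A_k^{-1}AZ = Z$; in other words, $A_k^{-1}A$ acts as the identity on $\range(Z) = \ker(B)$. Because $\range(V) \subseteq \range(Z) = \ker(B)$, every vector of the form $VAu$ lies in $\ker(B)$, and therefore $A_k^{-1}A \cdot (VAu) = VAu$, giving $A_k^{-1}A \cdot VA = VA$. For the reverse product, I note that $\ker(A) \subseteq \ker(VA)$ trivially (multiplying by $V$), and $\ker(A_k^{-1}A) = \ker(A)$ because $A_k^{-1}$ is invertible. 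Then for any $v$, the projector decomposition $v = A_k^{-1}Av + (I - A_k^{-1}A)v$ has its second summand in $\ker(A) \subseteq \ker(VA)$, so $VA \cdot A_k^{-1}Av = VA(A_k^{-1}Av) = VAv$, i.e., $VA \cdot A_k^{-1}A = VA$.

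The main obstacle, which is fairly mild, is keeping track of the relevant subspace inclusions, in particular distinguishing $\range(V) = \range(Z)$ from $\range(VA)$, and confirming that $\ker(A_k^{-1}A) = \ker(A)$ so that the projector decomposition of any $v$ truly collapses the nuisance term under $VA$. Once these are established, both products reduce to $VA$ with no further computation.
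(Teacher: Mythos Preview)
Your proof is correct and follows essentially the same route as the paper: both you and the paper establish the stronger identity $A_k^{-1}A \cdot VA = VA \cdot A_k^{-1}A = VA$, using $A_k^{-1}AZ = Z$ (via $BZ=0$) for one direction. For the other direction the paper writes $VAA_k^{-1}A = V(AA_k^{-1})A = VA$ using that $AA_k^{-1}$ is a projector onto $\range(A)$, whereas you argue via $\ker(A_k^{-1}A)=\ker(A)\subseteq\ker(VA)$ and the projector decomposition; these are equivalent, since $AA_k^{-1}A=A$ is precisely the statement that $\range(I-A_k^{-1}A)\subseteq\ker(A)$.
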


	\begin{proof}
		By writing $VA = Z(Z^T A Z)^{-1}Z^TA$, it is clear that $VA$ is a projector onto $\ker(B)$. Item (i) holds because of Lemma \ref{lem:MandN}.
		
		To verify (ii), we first note that
		$$
		VAA_k^{-1} A = VA,
		$$
		because $AA_k^{-1}$ is a projector (this follows from the fact that $A_k^{-1}A = (AA_k^{-1})^T$ is a projector) onto the range of $A$. Because $A_k^{-1} A = I - A_k^{-1}B^T W_kB$, we can write
		$$
		A_k^{-1}AZ = Z - A_k^{-1}B^T WB Z = Z.
		$$
		Therefore,
			\begin{align*}
			A_k^{-1} A VA &= A_k^{-1}AZ(Z^TAZ)^{-1}Z^T A \\
			&= Z(Z^TAZ)^{-1}Z^T A \\
			&= VA  \\
			&= VAA_k^{-1} A.
			\end{align*}
	\end{proof}
	
	\begin{theorem}
		Let $\sK$ be nonsingular with $A$ having nullity $k$, and let $W_k \in \mathbb{R}^{m \times m}$ be a rank-$k$ matrix such that $A + B^T W_k B$ is positive definite. The preconditioned operator $\sM_k^{-1}\sK$ has four distinct eigenvalues:
		\begin{itemize}
			\item $\lambda = -1$ with multiplicity $k$;
			\item $\lambda = 1$ with multiplicity $n-m+k$;
			\item $\lambda = \frac{1 \pm \sqrt{5}}{2}$, each with multiplicity $m-k$.
		\end{itemize}
	\end{theorem}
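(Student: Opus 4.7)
The plan is to pass to the split preconditioned operator $\widetilde{\sK} := \sM_k^{-1/2}\sK\sM_k^{-1/2}$, which is similar to $\sM_k^{-1}\sK$ and hence has the same spectrum. Writing $\widetilde{\sK} = \left[\begin{smallmatrix}\tilde A & \tilde B^T \\ \tilde B & 0\end{smallmatrix}\right]$ with $\tilde A = A_k^{-1/2}AA_k^{-1/2}$ and $\tilde B = S_k^{-1/2}BA_k^{-1/2}$, I would first transfer the first two conclusions of Lemma~\ref{lem:wishlist}: since $\rank(A) + \rank(B^T W_k B) = (n-k) + k = n$, Lemma~\ref{lem:MandN} makes $A_k^{-1}A$ a rank-$(n-k)$ projector, so the symmetric matrix $\tilde A$ is itself a rank-$(n-k)$ orthogonal projector, and $\tilde B \tilde B^T = S_k^{-1/2} B A_k^{-1} B^T S_k^{-1/2} = I$ by definition of $S_k$.

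The substitute for the failed orthogonality $\tilde A \tilde B^T = 0$ of the maximal-nullity case is the claim that $\tilde A$ and the orthogonal projector $\tilde B^T \tilde B$ commute. Since both are symmetric, this is equivalent to $\tilde A \tilde B^T \tilde B$ being symmetric, and using \eqref{eq:bSub} in the form $A A_k^{-1} B^T = B^T - B^T W_k S_k$ a short computation gives
\[
\tilde A \tilde B^T \tilde B = A_k^{-1/2} B^T S_k^{-1} B A_k^{-1/2} - A_k^{-1/2} B^T W_k B A_k^{-1/2},
\]
in which both terms are manifestly symmetric. I expect this commutativity step to be the main obstacle, since \eqref{eq:bSub} no longer annihilates $\tilde A \tilde B^T$ outright and instead leaves a residual $B^T W_k S_k$ term that has to be absorbed into the symmetric structure.

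With commutativity in hand, for any $\lambda \neq 0$ (and invertibility of $\sK$ rules out $\lambda = 0$) I would eliminate $v = \lambda^{-1}\tilde B u$ from the eigenvalue equation to arrive at
\[
(\lambda \tilde A + \tilde B^T \tilde B)u = \lambda^2 u.
\]
Because $\tilde A$ and $\tilde B^T \tilde B$ are commuting orthogonal projectors, they share an orthonormal eigenbasis on which they act simultaneously with eigenvalue pairs $(a,b) \in \{0,1\}^2$, and on each joint eigenspace the displayed equation collapses to the scalar identity $\lambda^2 - a\lambda - b = 0$. The nonzero roots are $\lambda = \pm 1$ in the $(0,1)$ case, $\lambda = 1$ in the $(1,0)$ case, and $\lambda = \frac{1 \pm \sqrt 5}{2}$ in the $(1,1)$ case.

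To pin down multiplicities, let $d_{ab}$ denote the dimensions of the four joint eigenspaces. I would use $d_{10} + d_{11} = \rank(\tilde A) = n-k$, $d_{01} + d_{11} = \rank(\tilde B^T \tilde B) = m$, $\sum d_{ab} = n$, and $d_{00} = 0$ (since any $u \in \ker \tilde A \cap \ker \tilde B$ would yield $(u,0) \in \ker \widetilde{\sK}$, contradicting invertibility). Solving these linear relations gives $d_{11} = m-k$, $d_{01} = k$, $d_{10} = n-m$, and each joint-eigenspace direction contributes one eigenvector per admissible nonzero root, producing exactly the claimed multiplicities $k$ for $\lambda = -1$, $d_{01} + d_{10} = n - m + k$ for $\lambda = 1$, and $m-k$ for each of $\lambda = \frac{1 \pm \sqrt 5}{2}$, summing to $n+m$ as required.
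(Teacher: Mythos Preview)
Your argument is correct and takes a genuinely different route from the paper's. The paper never symmetrizes: it substitutes $y=\lambda^{-1}S_k^{-1}Bx$ into the generalized eigenvalue equations, then uses Lemma~\ref{lem:sc_add} (the null-space formula $S_k^{-1}=W_k+(BB^T)^{-1}B(A-AVA)B^T(BB^T)^{-1}$ with $V=Z(Z^TAZ)^{-1}Z^T$) and Lemma~\ref{lem:commute} to reduce to $A_k^{-1}Ax-\lambda^{-1}VAx+(\lambda^{-1}-\lambda)x=0$, where $A_k^{-1}A$ and $VA$ are commuting \emph{oblique} projectors of ranks $n-k$ and $n-m$; the case split is then over $\ker(A)$, $\range(VA)$, and $\ker(VA)\cap\range(A_k^{-1}A)$. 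You instead pass to the split form and show that the \emph{orthogonal} projectors $\tilde A$ and $\tilde B^T\tilde B$ commute via the one-line symmetry check $\tilde A\tilde B^T\tilde B = A_k^{-1/2}B^TS_k^{-1}BA_k^{-1/2}-A_k^{-1/2}B^TW_kBA_k^{-1/2}$, which follows directly from \eqref{eq:bSub} and bypasses $Z$, $V$, and Lemmas~\ref{lem:sc_add}--\ref{lem:commute} entirely. Your dimension count via $d_{00}=0$ and the three rank constraints is equivalent to the paper's case analysis but packaged more cleanly. For the theorem itself your route is shorter and more self-contained; the paper's detour through Lemma~\ref{lem:sc_add} is not wasted, however, since that explicit formula for $S_k^{-1}$ is precisely what motivates the BFBT Schur-complement approximation in Section~\ref{sec:prec_sc_approx}.
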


	\begin{proof}
		We consider the eigenvalue equations for the preconditioned system:
		\begin{subequations}
			\begin{align}
				\label{eqn:eigeq1}
				Ax + B^T y &= \lambda A_k x; \\
				\label{eqn:eigeq2}
				Bx &= \lambda S_k y.
			\end{align}
		\end{subequations}
		From \eqref{eqn:eigeq2} we obtain $y = \frac{1}{\lambda}S_k^{-1}Bx$. Substituting this into \eqref{eqn:eigeq1} and re-arranging yields
		\begin{equation}
			\label{eqn:nrg_first}
			A_k^{-1}Ax + \frac{1}{\lambda}A_k^{-1}B^T S_k^{-1}B x - \lambda x = 0.
		\end{equation}
		By Lemma \ref{lem:sc_add}, we can write
		\begin{align}
		\label{eqn:absb}
		\begin{aligned}
		A_k^{-1} B^T S_k^{-1} B &= A_k^{-1}B^T W_k B 
		\\ &\ \ \ \ \ \ + A_k^{-1}B^T  (BB^T)^{-1}B(A-AVA)B^T(BB^T)^{-1}B. 		
		\end{aligned}
		\end{align}
		As was discussed in the proof of Lemma \ref{lem:commute}, $VA$ is a projector onto $\ker(B)$, meaning that $I-VA$ is a projector onto $\range(B)$. Because $B^T(BB^T)^{-1}B$ is an orthogonal projector onto this subspace, we have
		$$
		(I-VA)B^T(BB^T)^{-1}B = I-VA.
		$$
		Similarly, $B^T(BB^T)^{-1}B(I-AV) = I-AV$. Thus, we can further simplify \eqref{eqn:absb}, using relations we developed in Lemma~\ref{lem:commute}:
		\begin{align*}
			A_k^{-1} B^T S_k^{-1} B &= A_k^{-1}B^T W_k B + A_k^{-1}(A-AVA)\\
			&= I - A_k^{-1}AVA \\
			&= I -VA.
		\end{align*}
		We can thus rewrite \eqref{eqn:nrg_first} as
		\begin{equation}
			\label{eqn:nrg_second}
			A_k^{-1}Ax - \frac{1}{\lambda}VA x + \left( \frac{1}{\lambda} -\lambda \right)x = 0.
		\end{equation}
		By Lemma \ref{lem:commute}, $A_k^{-1}A$ and $VA$ are commuting projectors; thus, they have the same eigenvectors. Because $VA$ has rank $n-m$ and $A_k^{-1}A$ has rank $n-k$, we have
		$$
		\range(VA) \subseteq \range(A_k^{-1}A) \textrm{ and } \ker(A_k^{-1}A) \subseteq \ker(VA).
		$$
		We now consider $x$ in the ranges/kernels of these projectors. 
		
		\textbf{Case I:} When $x \in \ker(A)$, \eqref{eqn:nrg_second} becomes
		\begin{equation}
		\label{eqn:xKerA}
		\left(\frac{1}{\lambda} - \lambda \right) x = 0.
		\end{equation}
		We note that $x$ cannot be zero, as \eqref{eqn:eigeq1} would necessarily imply $y = 0$. Thus, \eqref{eqn:xKerA} gives $k$ eigenvectors corresponding to each of the eigenvalues $\lambda = \pm 1$.
		
		\textbf{Case II:} When $x \in \range(VA)$ (and therefore also in $\range(A_k^{-1}A)$), \eqref{eqn:nrg_second} becomes
		$$
		\left(1 - \lambda \right) x = 0,
		$$
		which gives $n-m$ additional eigenvectors corresponding to the eigenvalue $\lambda = 1$.
		
		\textbf{Case III:} if $x \in \ker(VA)$ and $\range(A_k^{-1}A)$ (we know there are $m-k$ such vectors because the projectors commute), \eqref{eqn:nrg_second} becomes
		$$
		\left( 1 + \frac{1}{\lambda} - \lambda \right) x = 0,
		$$
		which gives the eigenvalues $\lambda = \frac{1 \pm \sqrt{5}}{2}$, each with geometric multiplicity $m-k$.
		
		Cases I-III account for all $n+m$ eigenvectors of $\sM_k^{-1}\sK$.
	\end{proof}

	\subsection{Schur complement approximations}
	\label{sec:prec_sc_approx}
	In practice, the blocks $A_k$ and $S_k$ of the ideal preconditioner $\sM_k$ defined in \eqref{eqn:Mk} are too expensive to invert exactly. While developing suitable approximation strategies for these terms often requires some knowledge of the problem at hand, we provide here two strategies for approximately inverting the Schur complement $S_k$.
	
	First, recall from Lemma \ref{lem:mrd_schur} that when $A$ has maximal nullity we have $S_k^{-1} = W_k$. Thus, when $A$ has high but not maximal nullity, it is reasonable to use an approximation of the form
	\begin{equation}
		\label{eq:sk_wki}
		S_k^{-1} \approx W_k + \beta I,
	\end{equation}
	where $\beta$ is a small positive value. We add the $\beta I$ term because if $A$ is not maximally rank-deficient then $W_k$ will be singular. We refer to this strategy as the ``WkI Schur complement approximation.''
	
	For our second strategy, recall that Lemma \ref{lem:sc_add} tells us that
	\begin{align*}
	S_k^{-1} &= W_k + (BB^T)^{-1}B(A-AVA)B^T(BB^T)^{-1} \\
	&= W_k + (BB^T)^{-1}BA\underbrace{(I-VA)}_{=:P}B^T(BB^T)^{-1}.
	\end{align*}
	Since $VA$ is a projector whose range is $\ker(B)$ and whose kernel is $\ker(Z^T A)$, the matrix $P=(I-VA)$ has range given by $\ker(Z^T A)$ and kernel given by $\ker(B)$. Thus, we consider replacing the projector $(I-VA)$ by the orthogonal projector onto $\range(B)$, defined by $P_B = B^T(BB^T)^{-1}B$. This matrix has the same kernel as $P$ but a different range, and has the advantage of yielding a considerably simpler second term, as we can write:
	\begin{align*}
		(BB^T)^{-1}B A P_B B^T (BB^T)^{-1} &= (BB^T)^{-1}B A B^T(BB^T)^{-1}B B^T (BB^T)^{-1} \\
		&= (BB^T)^{-1}B A B^T (BB^T)^{-1}.
	\end{align*}
	Thus, we can also consider the Schur complement approximation:
	\begin{equation}
		\label{eq:sk_bfbt}
		S_k^{-1} \approx W_k + (BB^T)^{-1}B A B^T (BB^T)^{-1}.
	\end{equation}
	We note that this modified second term is similar to the BFBT preconditioner proposed by Elman \cite{e99} for the Navier-Stokes equations; thus, we refer to this as the ``BFBT Schur complement approximation.''
	
	\section{Numerical experiments}
	\label{sec:numex}
	In this section we consider implementations of the block diagonal preconditioner described in Section \ref{sec:prec_ours}. All experiments are run in MATLAB R2021a on a commodity desktop PC. We report computation times for all experiments. The code is not optimized for efficiency and the measurements do not represent what would be possible with an optimized, state-of-the-art code base; they are included as a way to compare the computational costs of different approaches and validate our analytical observations.
	
	\subsection{Selection of weight matrix}
	Here we detail our general approach for choosing $W_k$. For simplicity, all our matrices $W_k$ are diagonal matrices with either 1 or 0 on the diagonal; thus, the augmented matrix $A_k$ is equal to $A$ in addition to $k$ terms of the form $b^Tb$, where $b$ is a single row of $B$. Hence, our task of selecting $W_k$ becomes the task of selecting which rows of $B$ to use in to augment $A$.
	
	We begin by forming a matrix $A_{drop}$ formed by eliminating very small elements of $A$ (for our purposes, we eliminate those matrix entries whose absolute values are less than machine epsilon times the largest magnitude entry in $A$). We then select rows of $B$ that increase the structural rank of $A_{drop}$ until the matrix $A_{drop}+ \sum_{i} b_i^T b_i$ has full structural rank. These selected rows of $b$ do not guarantee that the augmented matrix $A + \sum_{i} b_i^T b_i$ has full numerical rank or is sufficiently well-conditioned to avoid convergence problems, so in some cases we add additional rows of $B$; in this case, we greedily select the sparsest rows of $B$ to reduce fill-in of $A_k$.
	
	We note that, in general, this approach of selecting $W_k$ does \textit{not} guarantee a ``minimal-rank'' augmentation; that is, the rank of $W_k$ may be greater than the nullity of $A$. Finding a $W_k$ with rank exactly equal to the nullity of $A$ such that the augmented matrix $A_k$ is sufficiently well-conditioned to avoid numerical difficulty requires knowledge of the null-space of $A$ and of which vectors in $B$ will span that null space. That said, in many practical applications, for example in problems arising from discretizations of PDEs, some information on the discrete differential operators and their null space is often available and comes handy.
	
	\subsection{Constrained optimization problems}
	\subsubsection*{Problem statement}
	Given a positive semidefinite Hessian matrix $H \in \mathbb{R}^{n \times n}$, vectors $c \in \mathbb{R}^n$ and $b \in \mathbb{R}^m$ and a Jacobian matrix $J \in \mathbb{R}^{m \times n}$, consider the primal-dual pair of quadratic programs (QP) in standard form:
	\begin{subequations}
		\label{eq:QP}
		\begin{align}
			\min_x c^T x + \frac{1}{2}x^T H x \ \ \ \ &\textrm{s.t.} \ Jx = b, \ \ x \ge 0; \\
			\min_{x,y,z} b^T y - \frac{1}{2}x^T H x  \ \ \ \ &\textrm{s.t.} \ J^T y + z - Hx = c, z \ge 0,
		\end{align}
	\end{subequations}
	where $y$ and $z$ are vectors of Lagrange multipliers. In linear programming problems, we have $H=0$.
	
	Each step of a primal-dual interior-point method (IPM) to solve \eqref{eq:QP} requires solving a linear system of the form \cite{nw06}:
	\begin{equation*}
		\begin{bmatrix}
			H + X^{-1}Z & J^T \\
			J & 0
		\end{bmatrix}
		\begin{bmatrix}
			\Delta x \\
			\Delta y
		\end{bmatrix} = \begin{bmatrix}
			-c - Hx J^T y + \tau X^{-1}e \\
			b - Jx
		\end{bmatrix}.
	\end{equation*}
	See \cite{nw06} for full details. Some entries of the diagonal matrices $X$ and $Z$ approach zero as the IPM iterations proceed, so the leading block of the saddle-point matrix becomes increasingly ill-conditioned, with the largest magnitude entries occurring along the diagonal. Thus the leading block may become nearly singular or numerically singular, particularly if $H$ is singular.
	
	\subsubsection*{Description of test problems}
	We use an implementation of the predictor-corrector algorithm of Mehrotra \cite{m92}. The matrices for linear programming problems were obtained from the Sparse Suite matrix collection \cite{dh11}, and the quadratic programming problems are from TOMLAB\footnote{Test matrices available at https://tomopt.com/tomlab/.}. A summary of the test suite of LP problems used in our experiments is given in Table \ref{table:lpProblems}.
	
	\begin{table}[tbh!]
		\centering
		\begin{tabular}{ c c c c }
			\hline
			Problem ID & $m$ & $n$ & $\nnz(\sK)$ \\
			\hline
			\texttt{lp\_80bau3b} & 2,262 & 12,061 & 35,325 \\
			\texttt{lp\_bandm} & 305 & 472 & 2,966 \\
			\texttt{lp\_capri} & 271 & 482 & 2,378 \\
			\texttt{lp\_finnis} & 497 & 1,064 & 3,824 \\
			\texttt{lp\_fit1p} & 627 & 1,677 & 11,545 \\
			\texttt{lp\_ganges} & 1,309 & 1,706 & 8,643 \\
			\texttt{lp\_lofti} & 153 & 366 & 1,502 \\
			\texttt{lp\_maros\_r7} & 3,136 & 9,408 & 154,256 \\
			\texttt{lp\_osa\_14} & 2,337 & 5,497 & 371,894 \\
			\texttt{lp\_osa\_30} & 4,350 & 104,374 & 708,862 \\
			\texttt{lp\_pilot87} & 2,030 & 6,680 & 81,629 \\
			\texttt{lp\_scfxm1} & 330 & 600 & 3,332 \\
			\texttt{lp\_scsd8} & 397 & 2,750 & 11,334 \\
			\texttt{lp\_stair} & 356 & 614 & 4,617 \\
			\texttt{lp\_standmps} & 467 & 1,274 & 5,152 \\
			\texttt{lp\_stocfor2} & 2,157 & 3,045 & 12,402 \\
			\texttt{lp\_truss} & 1,000 & 8,806 & 36,642 \\
			\texttt{lp\_vtp\_base} & 198 & 346 & 1,397 \\
			\hline
		\end{tabular}
		\caption{Summary of linear programming (LP) problems used in numerical experiments. The value $nnz(\sK)$ gives the number of nonzeros arising in the saddle-point system at each interior point method (IPM) iteration.}
		\label{table:lpProblems}
	\end{table}

	\subsubsection*{Comparison of different augmentation and approximation strategies}
	In this experiment we consider preconditioners of the form
	\begin{equation}
		\label{eq:approxA}
		\sM = \begin{bmatrix}
			\tilde{A}_{aug} & 0 \\
			0 & B\hat{A}_{aug}^{-1}B^T,
		\end{bmatrix}
	\end{equation}
	where $\tilde{A}_{aug}$ and $\hat{A}_{aug}$ are approximations (potentially the same approximation) of an augmented leading block $A$. Our experiments are on matrices that arise while applying an interior-point method on an LPs, so the leading block $A$ is diagonal. We consider three augmentation strategies:
	\begin{enumerate}
		\item Partial augmentation: we take $A_{aug} = A + B^T W_k B$, where we form $W_k$ by selecting just enough rows of $B$ such that $A_{drop} + B^T W_k B$ has full structural rank, where $A_{drop}$ is the matrix obtained by setting to zero all elements of $A$ with absolute value less than or equal to machine-epsilon times the largest absolute magnitude value of $A$.
		\item Full augmentation: we take $A_{aug} = A + B^T B$.
		\item Identity augmentation: we take $A_{aug} = A + \rho I$, for some positive $\rho$.
	\end{enumerate}
	For $A_{aug}$ arising from partial and full augmentation, we consider three approximations for $\tilde{A}_{aug}$ and $\hat{A}_{aug}$ in \eqref{eq:approxA}:
	\begin{enumerate}
		\item Ideal approximation (ID): $\tilde{A}_{aug} = \hat{A}_{aug} = A_{aug}$. (This is too expensive to use in practice but we include it here for comparison purposes.)
		\item Diagonal approximation (D): $\tilde{A}_{aug} = \hat{A}_{aug} = \diag(A_{aug})$.
		\item Incomplete Cholesky approximation (IC) : $\tilde{A}_{aug} = \IC(A_{aug})$ and $\hat{A}_{aug} = \diag(A_{aug})$. We use ICT with drop tolerance of $0.01$.
	\end{enumerate}
	For the identity-based augmentation, the matrix $A_{aug}$ is diagonal, so we solve it exactly (that is, $\tilde{A}_{aug} = \hat{A}_{aug} = A_{aug}$).
	
	\begin{table}[tbh!]
		\centering
		\resizebox{\textwidth}{!}{%
			\begin{tabular}{|l|ccc|ccc|c|}
				\hline
				\multirow{2}{*}{Problem ID} & \multicolumn{3}{c|}{Partial}                            & \multicolumn{3}{c|}{Full}                            & Identity \\ \cline{2-8} 
				& \multicolumn{1}{c|}{ID} & \multicolumn{1}{c|}{D} & IC & \multicolumn{1}{c|}{ID} & \multicolumn{1}{c|}{D} & IC & ID \\ \hline
				\texttt{80bau3b}& \multicolumn{1}{c|}{5 (0.03)} & \multicolumn{1}{c|}{22 (0.03)} & 230 (0.02) & \multicolumn{1}{c|}{18 (2.0)} & \multicolumn{1}{c|}{122 (0.02)} & 254 (0.01) & 43 (0.02) \\ \hline
				\texttt{maros\_r7} & \multicolumn{1}{c|}{22 (3.7)} & \multicolumn{1}{c|}{22 (0.2)} & 56 (0.1) & \multicolumn{1}{c|}{2 (2.2)} & \multicolumn{1}{c|}{19 (0.1)} & 26 (0.1) & 11 (0.1) \\ \hline
		\end{tabular}}
		\caption{MINRES iteration counts for partial, full and identity-augmentation preconditioners for the \texttt{lp\_80bau3b} and \texttt{lp\_maros\_r7} problems, using various block approximation strategies (ID=ideal, D=diagonal, IC=incomplete Cholesky). Time per iteration (in seconds) is given in parentheses.}
		\label{table:allPreconsIter}
	\end{table}
	
	\begin{table}[tbh!]
		\centering
		\resizebox{\textwidth}{!}{%
			\begin{tabular}{|l|ccc|ccc|}
				\hline
				\multirow{2}{*}{Problem ID} & \multicolumn{3}{c|}{Partial augmentation}                            & \multicolumn{3}{c|}{Full augmentation}                            \\ \cline{2-7} 
				& \multicolumn{1}{c|}{Rank($W$)} & \multicolumn{1}{c|}{$\nnz(A_W)$} & $\nnz(\IC(A_W))$ & \multicolumn{1}{c|}{Rank($W$)} & \multicolumn{1}{c|}{$\nnz(A_W)$} & $\nnz(\IC(A_W))$ \\ \hline
				\texttt{80bau3b} & \multicolumn{1}{c|}{2} & \multicolumn{1}{c|}{12,249} & 12,101 & \multicolumn{1}{c|}{2,262} &  \multicolumn{1}{c|}{456,943} & 14,183 \\ \hline
				\texttt{maros\_r7} & \multicolumn{1}{c|}{2,511} & \multicolumn{1}{c|}{1,101,752} & 31,343 & \multicolumn{1}{c|}{3,136} & \multicolumn{1}{c|}{1,230,928} & 10,761 \\ \hline
		\end{tabular}}
		\caption{Comparison of memory usage for partial and full augmentation for the \texttt{lp\_80bau3b} and \texttt{lp\_maros\_r7} problem.}
		\label{table:allPreconsMem}
	\end{table}
	
	We use matrices that arise from IPMs on the test problems \texttt{lp\_80bau3b} and \texttt{lp\_maros\_r7}. Iteration counts and time per iteration are given in Tables \ref{table:allPreconsIter} and \ref{table:allPreconsMem}.
	
	We observe that for \texttt{lp\_80bau\_3b}, the partial augmentation preconditioner outperforms the full augmentation preconditioner in terms of both iteration count and memory usage. This is because the leading block of this matrix is only mildly rank-deficient, so we only need a low-rank augmentation to make it nonsingular (which leads to a much sparser augmented matrix than the full augmentation); additionally, when we fully augment this matrix we are far away from the ``ideal'' amount of augmentation (i.e., the rank of augmentation that would yield a constant number of eigenvalues in an ideally-preconditioned iterative solver) because the leading block is nowhere near lowest-rank.
	
	In contrast, the leading block for \texttt{lp\_maros\_r7} is highly rank-deficient, as even the minimal amount of augmentation to obtain a structurally nonsingular leading block requires using most of the rows of $B$ (2,511, when $m$ for this problem is 3,136). And we observe that, in cases like these where the nullity of the leading block is high, we are close enough to the lowest-rank case that full augmentation performs well. In this case, it actually performs better than the partial augmentation in terms of iteration counts and computation time because the fully augmented leading block is more well-conditioned than the partially augmented leading block. Recall that our procedure for choosing $W_k$ only looks at structural rank, and does not guarantee that the augmented matrix is actually nonsingular (so we may still encounter numerical difficulties without further augmentation).
	
	Finally, we note that the incomplete Cholesky approximation strategy is less effective than the diagonal approximation strategy. One reason for this is that by the time IPM matrices are singular, the largest magnitude entries tend to occur along the diagonal; thus, a diagonal leading block approximation is generally effective (as we will see in the next set of experiments). The other is that when we used the incomplete Cholesky in the leading block, we avoided using the inverse of the incomplete Cholesky factors in the Schur complement approximation to avoid introducing too much computational expense. Thus, the Schur complement approximation is not equal to $B \tilde{A}_{aug}^{-1}B^T$ (where $\tilde{A}_{aug}$ is the selected leading block approximation); and as we saw in Section \ref{sec:prec_ours}, this has an impact on the theoretical properties of the preconditioned operator.
	
	\subsubsection*{Running partial augmentation preconditioners on LP test suites}
	Here we consider preconditioning the complete set of problems described in Table \ref{tab:allLP}. The matrices reported below are the first matrices for which the IPM generates a matrix with a numerically singular leading block. We consider the partial augmentation preconditioner of the form \eqref{eq:approxA} with the diagonal leading block approximation strategy: that is, we define $P_D$ using $\tilde{A}_{aug} = \hat{A}_{aug} = \diag(A_{aug})$. In all cases, we select $W_k$ by augmenting $A$ until the matrix $A_{drop} + B^T W_kB$ is structurally nonsingular. MINRES solver tolerance is set to a relative residual norm of $10^{-8}$.
	
	\begin{table}[tbh!]
		\centering
			\begin{tabular}{|c|c|c|cc|}
				\hline
				\multirow{2}{*}{Problem ID} & \multirow{2}{*}{$\rank(W_k)$} & \multirow{2}{*}{$\nnz(A_k)$} & \multicolumn{2}{c|}{$P_D$}\\ \cline{4-5} 
				&  &  & \multicolumn{1}{c|}{Iters} & Time per iter \\ \hline
				\texttt{80bau3b} & 1 & 12,117 & \multicolumn{1}{c|}{20} & 0.02 \\ \hline
				\texttt{bandm} & 5 & 1,444  & \multicolumn{1}{c|}{40} & 0.003 \\ \hline
				\texttt{capri} & 13 & 2,230 & \multicolumn{1}{c|}{67} & 0.003 \\ \hline
				\texttt{finnis} & 29 & 11,184 & \multicolumn{1}{c|}{77} & 0.006 \\ \hline
				\texttt{fit1p} & 5 & 2,545 & \multicolumn{1}{c|}{28} & 0.06 \\ \hline
				\texttt{ganges} & 88 & 2,690 & \multicolumn{1}{c|}{41} & 0.01 \\ \hline
				\texttt{lofti} & 13 & 966 & \multicolumn{1}{c|}{194} & 0.001 \\ \hline
				\texttt{maros\_r7} & 64 & 73,102 & \multicolumn{1}{c|}{26} & 0.2 \\ \hline
				\texttt{osa\_14} & 34 & 98,459,317 & \multicolumn{1}{c|}{171} & 0.06 \\ \hline
				\texttt{osa\_30} & 4 & 354,880,632 & \multicolumn{1}{c|}{80} & 0.1 \\ \hline
				\texttt{pilot87} & 5 & 133,798 & \multicolumn{1}{c|}{37} & 0.2 \\ \hline
				\texttt{scfxm1} & 1 & 840 & \multicolumn{1}{c|}{32} & 0.003 \\ \hline
				\texttt{scsd8} & 36 & 16,826 & \multicolumn{1}{c|}{6} & 0.003 \\ \hline
				\texttt{stair} & 33 & 9,994 & \multicolumn{1}{c|}{11} & 0.006 \\ \hline
				\texttt{standmps} & 2 & 557,906 & \multicolumn{1}{c|}{65} & 0.004 \\ \hline
				\texttt{stocfor2} & 61 & 3,411 & \multicolumn{1}{c|}{9} & 0.1 \\ \hline
				\texttt{truss} & 15 & 18,468 & \multicolumn{1}{c|}{34} & 0.005 \\ \hline
				\texttt{vtp\_base} & 10 & 3,126 & \multicolumn{1}{c|}{125} & 0.002 \\ \hline
		\end{tabular}
		\caption{MINRES iteration counts and time per iteration (in seconds) of the partial augmentation preconditioners with diagonal approximations of $A_k$.}
		\label{tab:allLP}
	\end{table}

	Eigenvalues of the preconditioned operator $P_D^{-1}\sK$ are shown in Figure \ref{fig:fit1p_eig} for \texttt{lp\_fit1p} problem. There is strong clustering of eigenvalues near $1, \frac{1 \pm \sqrt{5}}{2}$.

	\begin{figure}[tbh!]
		\centering
		\includegraphics[width=.8\linewidth]{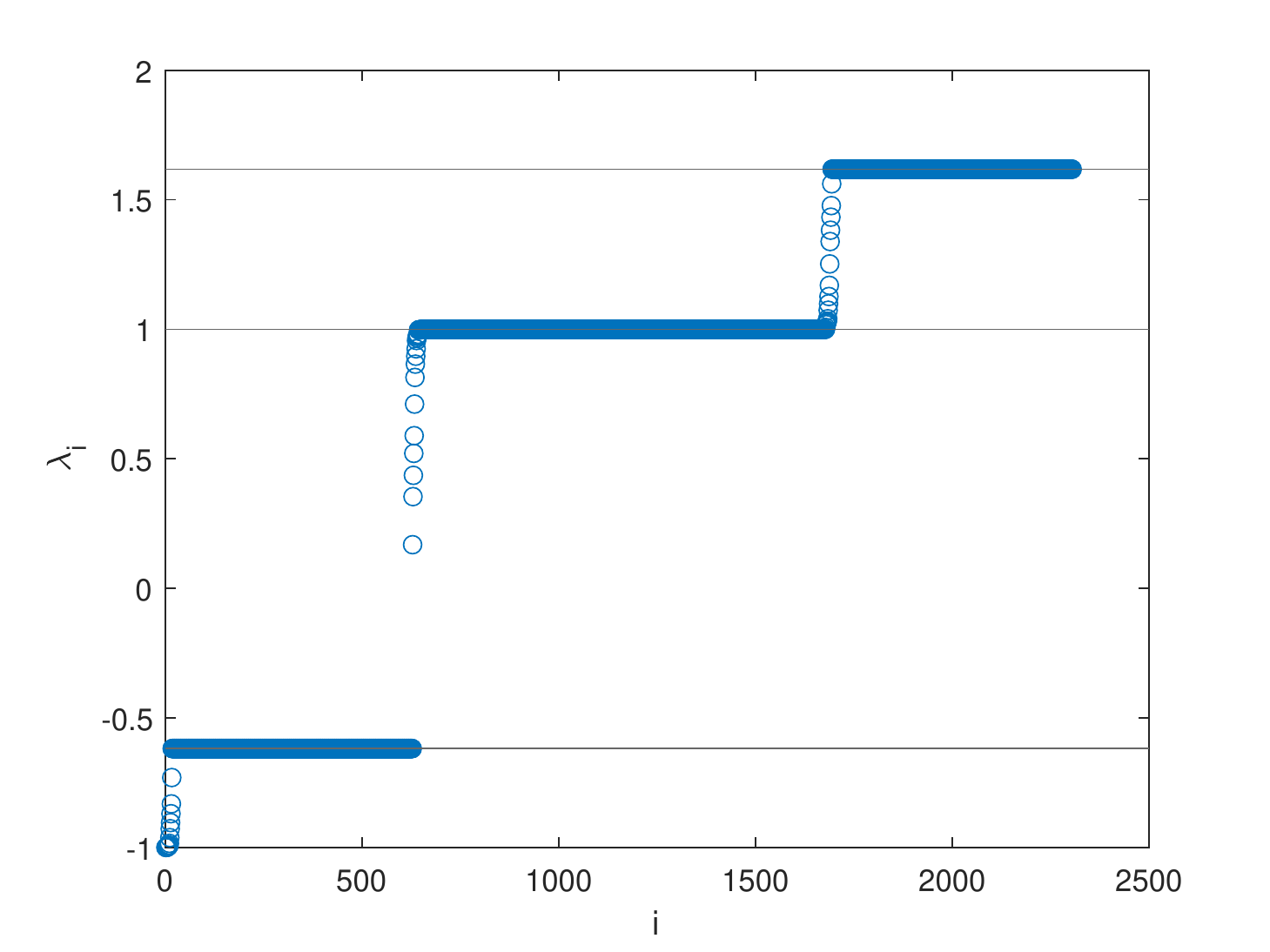}  
		\caption{Eigenvalues of preconditioned operator $P_D^{-1}\sK$ for matrix arising in the IPM solution of the \texttt{lp\_fit1p} problem. Horizontal lines are shown at $y=\pm1, \frac{1 \pm \sqrt{5}}{2}$.}
		\label{fig:fit1p_eig}
	\end{figure}
	
	\subsubsection*{Using preconditioned MINRES iterations in an IPM}
	Here we consider using preconditioned inner solves in an IPM solver. For our test problems, we use the LP \texttt{lp\_stocfor2} and the TOMLAB QP problem 37 (which has $m=490$; $n=1275$; 3,288 nonzeros in the Jacobian matrix; and 290 in the Hessian). Our preconditioning approach at each iteration is as follows:
	\begin{itemize}
		\item If the leading block $A$ is nonsingular, we use the preconditioner
		$$
		\sM_{LP} = \begin{bmatrix}
			A & 0 \\
			0 & BA^{-1}B^T
		\end{bmatrix}
		$$
		for the LP (recall that in this context $A$ is diagonal), and
		$$
		\sM_{QP} = \begin{bmatrix}
			\IC(A) & 0 \\
			0 & B(\diag(A))^{-1}B^T
		\end{bmatrix}
		$$
		for the QP, with an ICT drop tolerance of $0.01$.
		\item If the leading block $A$ is singular, we select the lowest-rank $W_k$ to make $A_{drop} + B^T W_kB$ nonsingular, and use the preconditioner
		$$
		\sM = \begin{bmatrix}
			\diag(A_k) & 0 \\
			0 & B(\diag(A_k))^{-1}B^T
		\end{bmatrix}.
		$$
	\end{itemize}
	We solve the IPM to a duality gap tolerance of $10^{-6}$ and use an inner tolerance of $10^{-7}$ for the MINRES solves.
	
	We see that for both problems, using inexact solves results in modestly more IPM iterations, as we would expect. For the LP, the leading block was nonsingular for the first 21 iterations and numerically singular for the final 10. For the QP, the leading block was nonsingular for the first 22 iterations and singular for the last 16. Notice that the average MINRES iteration counts are correspondingly higher for the QP. This is because, at the LP steps with a nonsingular leading block, we were able to use an ideal preconditioner because the leading block is diagonal, and convergence was always achieved in roughly three iterations. Additionally, the nonzero Hessian in the QP has some additional terms in the leading block that are dropped in the diagonal leading block approximation once the leading block becomes singular.
	
	\begin{table}[tbh!]
		\resizebox{\textwidth}{!}{%
			\begin{tabular}{|ll|c|ccc|}
				\hline
				\multicolumn{2}{|c|}{Problem} & Direct inner solve & \multicolumn{3}{c|}{MINRES inner solve} \\ \hline
				\multicolumn{1}{|c|}{\multirow{2}{*}{ID}} & \multirow{2}{*}{Type} & \multirow{2}{*}{IPM iterations} & \multicolumn{1}{c|}{\multirow{2}{*}{IPM iterations}} & \multicolumn{2}{c|}{Inner iters (average)}    \\ \cline{5-6} 
				\multicolumn{1}{|c|}{} &  & & \multicolumn{1}{c|}{} & \multicolumn{1}{c|}{Predictor} & Corrector \\ \hline
				\multicolumn{1}{|c|}{\texttt{stocfor2}} & LP & 27 & \multicolumn{1}{c|}{31}& \multicolumn{1}{c|}{4.1} & 4.1 \\ \hline
				\multicolumn{1}{|c|}{\texttt{TOMLAB37}}  &  QP & 31 & \multicolumn{1}{c|}{38} & \multicolumn{1}{c|}{35.1} & 36.6 \\ \hline
		\end{tabular}}
		\caption{Comparison of IPM iterations using a direct vs. preconditioned MINRES solver for the inner linear system solves. Average number of inner MINRES iterations are reported for both the predictor and corrector steps.}
	\end{table}
		
	\subsubsection*{Testing different block approximation strategies}
	
	Here we test the WkI Schur complement approximation strategy (see Eq. \eqref{eq:sk_wki}). We use a matrix that arises at the 20th iteration of the IPM solution for the LP \texttt{maros\_r7} and use $\beta = 0.5$. As we have seen in our earlier LP experiments, by the time the IPM iterations have advanced enough to create a numerically singular leading block, the diagonal has enough large entries that the augmented matrix $A_k$ is mostly diagonally dominant. Thus, using $\diag(A_k)$ is often effective in approximating $A_k$. We include comparisons between the preconditioners in which:
	\begin{itemize}
		\item $A_k$ approximated by $\diag(A_k)$ and $S_k^{-1}$ is approximated by \\ $B\diag(A_k)^{-1}B^T$ (the preconditioner $P_D$ explored in the previous set of experiments);
		\item $A_k$ is approximated by $\diag(A_k)$ and $S_k^{-1}$ is approximated by $W_k + \beta I$ (``Diagonal+WkI'' or ``D+WkI'').
	\end{itemize}
	 For this experiment, our weight matrix $W_k$ has rank 2,911 (the minimum required to achieve structural nonsingularity of $A_{drop} + B^T W_kB$).
	
	A convergence plot is shown in Figure \ref{fig:wki_sc}. The $P_D$ preconditioner converges in 11 iterations and 1.4 seconds (0.1 seconds per iteration), and the Diagonal+WkI preconditioner in 102 iterations and 0.18 seconds (0.0018 seconds per iteration). While this is a significantly higher iteration count, we notice that this preconditioner is extremely cheap (in that it is fully diagonal) and thus results in faster computational time overall. We note that a basic Jacobi iteration on the original system (or Jacobi on the leading block combined with the WkI approximation of the Schur complement) does not lead to convergence. Thus, the leading block augmentation has utility in arriving at this surprisingly simple-looking preconditioner.
	
	\begin{figure}[tbh!]
		\centering
		\includegraphics[width=.8\linewidth]{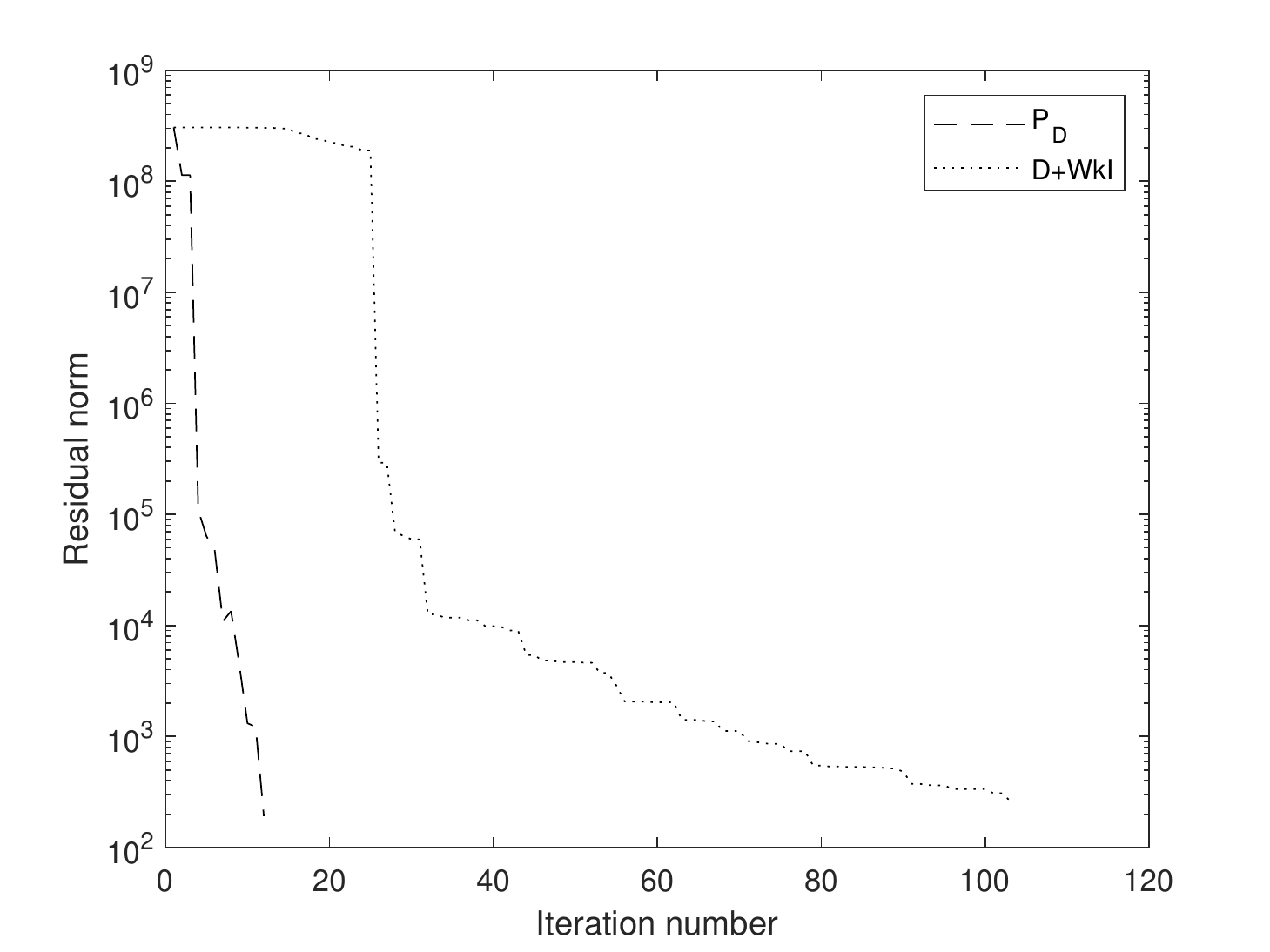}  
		\caption{Comparison of block approximation strategies (diagonal leading block + $B(\diag(A_k))^{1}B^T$ Schur complement; Diagonal leading block+WkI Schur complement) for a matrix arising from an IPM on the \texttt{lp\_maros\_r7} problem.}
		\label{fig:wki_sc}
	\end{figure}
		
	\subsection{A geophysical inverse problem}
	
	\subsubsection*{Problem statement}
	Here we consider the example of a geophysical inverse problem described in~\cite{h00}, which involves recovering a model based on observations of a field. The regularized problem is defined by
	\begin{align*}
		\min_{m,u} \ \ & \frac{1}{2}||Qu - b||^2 + \frac{\beta}{2}||W(m - m_{ref})||^2 \\
		\textrm{s.t.} \ \ &A(m)u = q,
	\end{align*}
	where $\beta$ is a regularization parameter, $m$ is a model, $m_{ref}$ is a reference model, $W$ is a weighting matrix, and $A$ is a large, sparse, nonsingular matrix that encodes the model conditions of the field being considered. If Gauss-Newton iterations are used, the linear system to be solved at each step takes the form
	\begin{equation*}
		\begin{bmatrix}
			Q^TQ & 0 & A^T \\
			0 & \beta W^T W & G^T \\
			A & G & 0
		\end{bmatrix}
		\begin{bmatrix}
			\delta u \\
			\delta m \\
			\delta \lambda
		\end{bmatrix} = 
		-\begin{bmatrix}
			r_u \\
			r_m \\
			r_{\lambda}
		\end{bmatrix},
	\end{equation*}
	with $G$ being the Jacobian of $A$. In the typical case of sparse observations, $G$ is sparse and $Q^TQ$ has high nullity.

	\subsubsection*{Testing different block approximation strategies}
	In this experiment we test the BFBT Schur complement approximation strategy (Eq. \eqref{eq:sk_bfbt}). We set the regularization parameter $\beta = 10^{-3}$. The leading block is highly singular, so we augment $A$ by all of $B$ to avoid numerical difficulties (as simply augmenting by enough rows of $B$ to make the augmented matrix structurally nonsingular still leads to a matrix that is highly ill-conditioned).
	
	Recall that the BFBT Schur complement approximation requires two solves for $B B^T$. Fortunately, for the geophysics problem, this term is sparse and banded. Thus, in computing this approximation, we will solve exactly for the $B B^T$ terms.
	
	We note that the augmented matrix $A+B^TB$ has an interesting structure, as we can see in Figure \ref{fig:geophys_vis_Ak}: if we partition the matrix four blocks with the (1,1)-block of size $m$ and the (2,2)-block of size $n-m$, we observed that the (1,1)- and (2,2)-blocks are banded (e.g., for a problem with $m=9,261$ and $n=17,261$, the bandwidths are 848 and 421, respectively), and can therefore be solved less expensively than the entire matrix $A + B^T B$. Thus, we can use block Jacobi to approximately solve $A_k$. Because stationary methods are often not especially effective as preconditioners, we will instead use block Jacobi as a preconditioner for an inner preconditioned conjugate gradient (PCG) solver for $A_k$.
	
	\begin{figure}[tbh!]
		\centering
		\includegraphics[width=.8\linewidth]{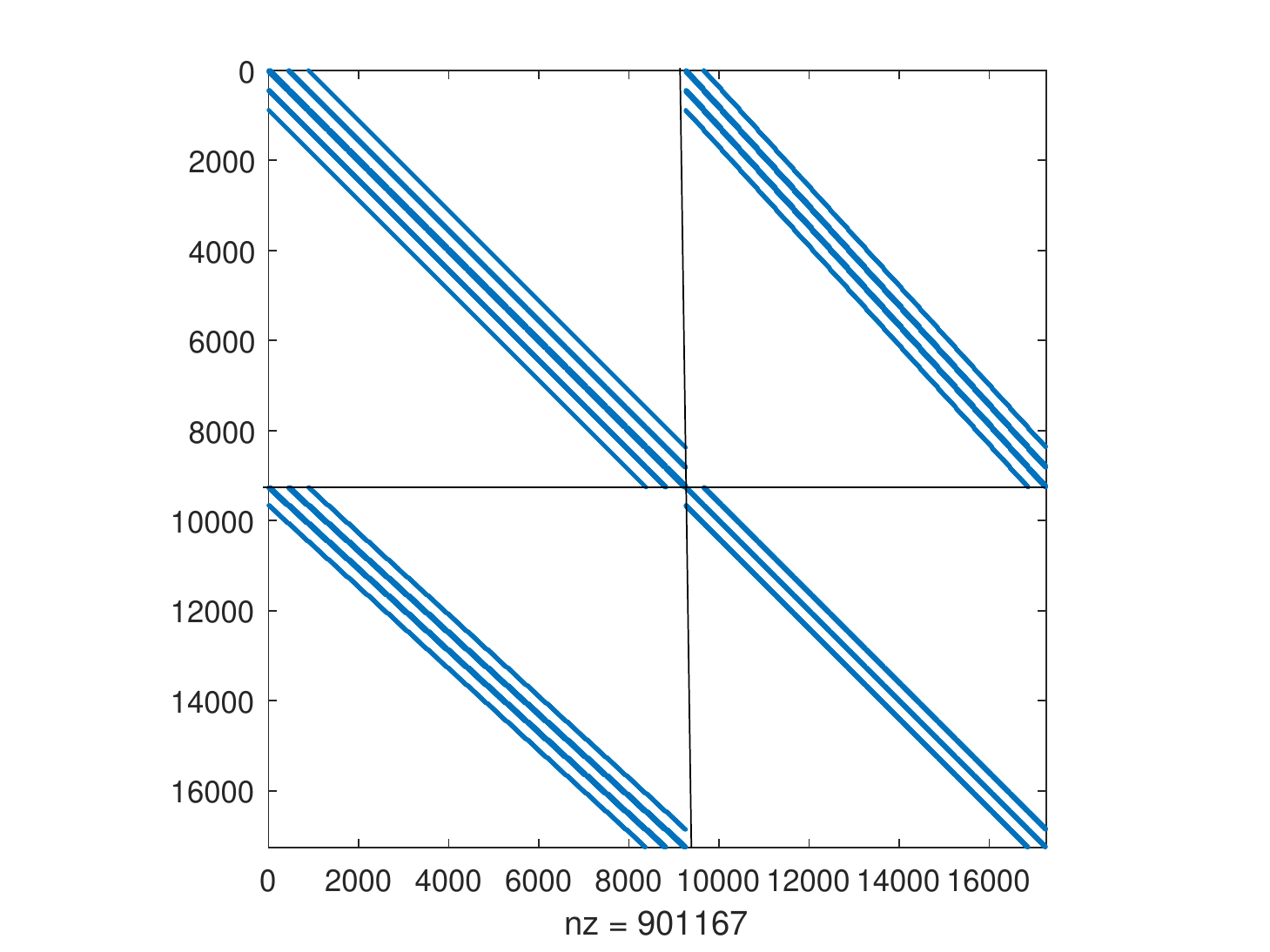}  
		\caption{Sparsity pattern of $A_k=A+B^T B$ for a geophysics problem with $m=9,261$ and $n=17,261$.}
		\label{fig:geophys_vis_Ak}
	\end{figure}

	Thus, in these experiments, we compare the preconditioners in which:
	\begin{itemize}
		\item $A_k$ is inverted exactly (which is generally not practical for large problems but is included here for validation and comparison), and $S_k^{-1}$ is approximated with the BFBT approximation. We denote this by ``Akinv+BFBT.''
		\item $A_k$ is inverted approximately using CG to an inner tolerance of $0.1$, with block Jacobi as a preconditioner, and $S_k^{-1}$ is approximated by the BFBT approximation. We denote this by ``CG+BFBT.''
	\end{itemize}
	We use MINRES for the Akinv+BFBT preconditioner and FGMRES(30) for the CG+BFBT.
	
	\begin{table}[tbh!]
		\centering
		\begin{tabular}{|c|c|cc|cc|}
			\hline
			\multirow{2}{*}{$m$} & \multirow{2}{*}{$n$} & \multicolumn{2}{c|}{Akinv+BFBT} &  \multicolumn{2}{c|}{CG+BFBT} \\ \cline{3-6} 
			& & \multicolumn{1}{c|}{Iters} & Time per iter & \multicolumn{1}{c|}{Iters} & Time per iter  \\ \hline
			2,197 & 3,195 & \multicolumn{1}{c|}{6} & 0.21 & \multicolumn{1}{c|}{9} & 0.20  \\ \hline
			4,913 & 9,009 & \multicolumn{1}{c|}{6} & 1.07 & \multicolumn{1}{c|}{10} & 0.76 \\ \hline
			\multicolumn{1}{|c|}{9,261} & \multicolumn{1}{c|}{17,261} & \multicolumn{1}{c|}{8} & \multicolumn{1}{c|}{2.87} & \multicolumn{1}{c|}{10} & 2.26  \\ \hline
		\end{tabular}
		\caption{Results (solver iteration counts and time per iteration) geophysics problems of varying size. Akinv+BFBT = exact solve for $A_k$, BFBT approximation for $S_k$; CG+BFBT = block Jacobi preconditioned CG for $A_k$, BFBT for $S_k$.}
		\label{tab:geophys}
	\end{table}

	Results are shown in Table \ref{tab:geophys}. The Akinv+BFBT preconditioner performs well in terms of iteration count, but includes a very expensive term in the $A_k$ solve. We note, however, that the number of preconditioned iterations is very close to what we would expect of the ideal preconditioner (with exact solves for both $A_k$ and $S_k$), which highlights the effectiveness of the BFBT Schur complement approximation for this problem. The CG+BFBT preconditioner achieves similar convergence to the Akinv+BFBT -- in particular, the number of iterations appears to be independent of problem size -- and is modestly less expensive per iteration in terms of compute time (we avoid the direct solve for $A_k$, but have some added expense from the inner CG solves and additional orthogonalization for FGMRES). On average, the inner PCG solves required 28.7 iterations for the first  test problem (with $m=2,197$ and $n=3,195$), 35.1 iterations for the second problem (with $m=4,913$ and $n=9,009$), and 35.8 iterations for the third (with $m=9,261$ and $n=17,261$). For larger problems, we speculate that CG+BFBT will outperform Akinv+BFBT by larger margins.

	\section{Concluding remarks}
	\label{sec:conclusions}
	We have developed a block diagonal preconditioner for saddle-point systems with a singular leading block. We showed how, by augmenting $A$ with a weight matrix of just high enough rank to overcome its nullity, we yield a preconditioned operator with a small, fixed number of distinct eigenvalues. In doing so, we have closed a gap in the existing literature, in analyzing a preconditioning approach for a scenario where the leading block of the saddle-point matrix is neither full rank nor does it have nullity equal to the number of rows of $B$.

	
	Specifically, we have considered block preconditioners that are based on approximating the augmented leading block of the saddle-point matrix and the augmented Schur complement.
	Typically, the construction of the weight matrix $W_k$ and the selection of effective approximations may be guided by the problem at hand (for example, in cases where the matrix blocks and Schur complement arise from well-studied discrete differential operators). We have provided some general approaches that may work for different problems. For $A_k$, we have included diagonal (for LPs), incomplete Cholesky (for QPs), block Jacobi and inner PCG iterations (for geophysics); and for $S_k$,  the $B(\diag(A_k))^{-1}B^T$ and WkI approximations (for the optimization problems), and the BFBT approximation (for the geophysics problem). 
	
	We have restricted ourselves to diagonal weight matrices with all ones and zeros along the diagonal and have described a method that looks only at the structural rank of a modified augmented matrix. Future work may include more sophisticated choices of the weight matrix, which may in turn yield faster convergence.

	\bibliographystyle{abbrv}
	\bibliography{bg22-ref}

\end{document}